\theoremstyle{plain}
\newtheorem{theorem}{Theorem}[section]
\newtheorem{lemma}[theorem]{Lemma}
\newtheorem{corollary}[theorem]{Corollary}
\newtheorem{remark}[theorem]{Remark}
\newtheorem{definition}[theorem]{Definition}
\newtheorem{property}[theorem]{Property}
\newcommand{\floor}[1]{\lfloor #1 \rfloor}
\newcommand{\ceil}[1]{\lceil #1 \rceil}
\title{On the Sum of Ricci-Curvatures for Weighted Graphs }
\author{Shuliang Bai
\thanks{Harvard University, Cambridge, MA 02138,
({\tt sbai525@cmsa.fas.harvard.edu})
}
\and
An Huang
\thanks{Brandeis University, Waltham, MA 02453,
({\tt anhuang@brandeis.edu})
}
\and
Linyuan Lu
\thanks{University of South Carolina, Columbia, SC 29208,
({\tt lu@math.sc.edu}). 
} 
\and
Shing-Tung Yau
\thanks{Harvard University, Cambridge, MA 02138,
({\tt yau@math.harvard.edu})}
}
\begin{document}
\maketitle

\begin{abstract}
In this paper, we generalize Lin-Lu-Yau's Ricci curvature to weighted graphs and give a simple limit-free definition. 
    We prove two extremal results on the sum of Ricci curvatures for weighted graph.

 A weighted graph $G=(V,E,d)$ is an undirected graph $G=(V,E)$ associated with a distance function $d\colon E\to [0, \infty)$.
    By redefining the weights if possible, without loss of generality, we assume that the shortest weighted distance between $u$ and $v$ is exactly $d(u,v)$ for any edge $uv$.
    Now consider a random walk whose transitive probability 
    from an vertex $u$ to its neighbor $v$ (a jump move along the edge $uv$) is proportional to $w_{uv}:=F(d(u,v))/d(u,v)$ for some given function $F(\bullet)$.
    We first generalize Lin-Lu-Yau's Ricci curvature definition to this weighted graph and give a simple limit-free representation of $\kappa(x, y)$ using a so called $\ast$-coupling functions.
    The total curvature $K(G)$ is defined to be the sum of Ricci curvatures over all edges of $G$.  We proved the following theorems: if $F(\bullet)$ is a decreasing function, then $K(G)\geq 2|V| -2|E|$; if $F(\bullet)$ is an increasing function, then $K(G)\leq 2|V| -2|E|$. Both equalities hold if and only if $d$ is a constant function plus the girth is at least $6$. 
    
    In particular, these imply a Gauss-Bonnet theorem for (unweighted) graphs with girth at least $6$, where the graph Ricci curvature is defined geometrically in terms of optimal transport.

\end{abstract}

\section{Introduction}
Ricci curvature is a fundamental concept from Riemannian Geometry\cite{Jost}
that  has been extended to a discrete setting.  There  are different definitions of  Ricci curvature defined on graphs,  see references \cite{Fchung, LYau, Ollivier1}. Among the
various curvature notions the Ollivier Ricci curvature,   is defined on arbitrary 
metric spaces equipped with a Markov chain,  and has extended some of results
for positively curved manifolds such as the Bonnet-Myers theorem bounding the diameter of the space via curvature, the Lichnerowicz theorem for the spectral gap of the Laplacian, a control on mixing properties of Brownian motion and the Levy-Gromov theorem for isometric
inequalities and concentration of measures\cite{Ollivier2}.  
In the special setting of graphs, the Ollivier Ricci curvature  is based on optimal transport
of  probability measures associated to a lazy random walk \cite{Ollivier1, Ollivier2}.  Analogously to a Riemannian manifold the Ricci curvature defined on  Riemannian manifold measures the local amount of non-flatness of the manifold, while the Ollivier Ricci curvature measures the distance (via the Wasserstein transportation distance) between two small balls centered at two given nodes.  By this notion, positive curvature implies that the neighbors of the two centers are close or overlapping, negative curvature implies that the neighbors of two centers are further apart, and zero curvature or near-zero curvature implies that the neighbors are locally embeddable in a flat surface. 
The Ollivier Ricci curvature provides a curvature of any two nodes and it depends on an idleness
parameter of the random walk. In 2011, Lin, Lu, and Yau \cite{LLY} modified this notion to a limit version so that it does not depend on the idleness parameter, which is more suitable for graphs, such as computing the curvature on random graphs or Cartesian product of graphs. 
Later on, many  properties and  consequences of the Ollivier Ricci curvature and the modified version have been done, see \cite{BCLMP, BJL, BBS,  JL, Smith}, etc. More recently, these curvatures has been applied in various research areas such as network
analysis\cite{NLLG, JJBP}, quantum computation, dynamic Networks\cite{p-adic}, etc.

When it comes to the applications of Ricci curvature, the weighted graph models are more useful than the unweighted graphs models, as in the real-world networks, not all relation have the same capacity. For this, the Ricci curvatures of graphs have been generalized to weighted graphs according to different needs, see \cite{FlorentinRadoslaw, p-adic}. 
In this paper, we study a more general definition of Ricci curvature defined on weighted graphs.  For any weighted graph, there are two symmetric positive valued functions $d, w$ defined on edges, the $d(i, j)$ represent the distance between $i, j\in V$ and $w_{ij}$ represent weight distribution on edge $(i,j)$ which is used to define the probability distribution functions. 
For any vertex $x\in V$ and any value $\alpha \in [0, 1]$,  the  probability distribution $\mu_x^{\alpha}$ assigns amount  $\alpha$ at vertex $x$ and amount $\frac{(1-\alpha)w_{xi}}{\sum_{y\sim x}w_{xy}}$ to all its neighbors $i$. 

Then $\alpha$-Ricci-curvature $\kappa_{\alpha}$ of edge $(x, y)$ is defined to be
$$\kappa_{\alpha}(x, y)=1-\frac{W(\mu_x^{\alpha}, \mu_y^{\alpha})}{d(x, y)},  $$
where $W(\mu_x^{\alpha}, \mu_y^{\alpha})$ is the Wasserstein transportation distance transporting  $\mu_x^{\alpha}$ to $\mu_y^{\alpha}$. 
By Lin-Lu-Yau's definition, the Ricci curvature $\kappa(x, y)$ is defined as
$$\kappa(x, y)=\lim\limits_{\alpha\to 1} \frac{k_{\alpha}(x, y)}{1-\alpha}. $$

Given a weighted graph with fixed $d$, the curvature $\kappa(x, y)$ is a multi-variate function with variables $w_{ij}$. 
We consider $K(G)=\sum_{x\sim y} \kappa(x, y)$: the sum of Ricci-curvatures over all edges of graph $G$. 
It is interesting to know the extremal value of $K(G)$ and the conditions for weighted graph $G$ to achieve these values. As the weight distribution function $w$ varies, the behavior of the extremal value of $K(G)$ changes. In this paper we study the maximal value and the minimal value of $K(G)$ in two different cases, and prove a version of the Gauss-Bonnet theorem for graphs with girth at least 6: Corollary \ref{coro:Gauss}.

The paper is organized as follows, in Section \ref{sec:notations}, we set up the notations of generalized Ricci curvature defined on weighted graphs and compare our definition with the existing ones, we also give a more simple expression of the generalized Ricci curvature using $\ast$-coupling function; in Section \ref{sec:sum}, we state and prove the results about the minimal and maximal of total curvature $K(G)$.

\section{Notations}\label{sec:notations}
In this section, we generalize the definition of Ricci-curvature of graphs to the weighted graphs. A weighted graph $G=(V, E, d)$ is a connected simple graph on  vertex set $V$ and edge set $E$ where set $E$ is associated by the distance function (or edge length function) $d: E\to R^{+}$ which assigns a positive value to each edge $e\in E$. For any two adjacent vertices $x, y$, we represent the length of edge $e=(x, y)$ as  $d(x, y)$ or $d(e)$. 
We call $G$ as a  {\it combinatorial graph} if the distance function $d$ is uniform on all edges, that is $d(e)=1$ by a scaling for all edges $e\in E$. The length of a path is the sum of  edge lengths on the path, for any two non-adjacent vertices $x, y$,  the distance $d(x, y)$ is the length of a minimal weight path among all paths that connect $x$ and $y$.  
For any vertex $x, y\in V$, notation $x\sim y$ represents that two vertices $x$ and $y$ are adjacent, $\Gamma(x)$ represents the set of vertices that are adjacent to $x$ and  $N(x)=\Gamma(x)\cup \{x\}$. 
In this paper, we study the undirected weighted graph, that is,  $d(x,y)=d(y, x)$ for all $(x, y)\in E$.
 The girth of a weighted graph, denoted $girth(G)$ is the size of the smallest
cycle  contained in the combinatorial graph. If the graph does not contain any cycles (i.e. it's an acyclic graph), its girth is defined to be infinity.

We introduce another positive symmetric function defined on the edges of graph $G$, which is used to define the probability distribution function $\mu_x$.
We call it as {\it weight distribution} function $w: E\to R^{+}$.  For better distinction,  we write the value of $w$ on edge $(x, y)$ as $w_{xy}$. Let $D_x=\sum_{y\sim x}w_{xy}$, then
for any vertex $x\in V$ and any value $\alpha \in [0, 1]$,  the  probability distribution $\mu_x^{\alpha}$ is defined as:
\begin{align}\label{equ:defofdistri}
\mu_x^{\alpha}(z)=
\begin{cases}
 \alpha,  & \text{if $z=x$}, \\
 (1-\alpha)\frac{w_{xz}}{D_x},  & \text{if $z\sim x$},\\
 0,& \text{otherwise}.
\end{cases}
\end{align}

\begin{definition}\label{probabilitydistribution}
 Let $G=(V, E, d)$ be a weighted graph.   A probability distribution over the vertex set $V$ is a mapping $\mu: V\to [0,1]$ satisfying $\sum_{x \in V} \mu (x)=1$. Suppose that two probability distributions $\mu_1$ and $\mu_2$ have finite support. A coupling between $\mu_1$ and $\mu_2$ is a mapping $A: V\times V\to [0, 1]$ with finite support so that
$$\sum\limits_{y \in V} A(x, y)=\mu_1(x) \ \text{and}\  \sum\limits_{x \in V} A(x, y)=\mu_2(y). $$

The transportation distance between two probability distributions $\mu_1$ and $\mu_2$ is defined as follows:
\begin{equation}\label{equ:transcoupling}
W(\mu_1, \mu_2)=\inf\limits_{A} \sum\limits_{x, y\in V} A(x, y)d(x, y),\end{equation}

where the infimum is taken over all coupling $A$ between $\mu_1$ and $\mu_2$.
\end{definition}

A coupling function provides a lower bound for the transportation distance, the following definition can provide an upper bound for the transportation distance.
\begin{definition}\label{Lipschitz}
Let $G=(V, E, d)$ be a locally finite weighted graph. Let $f: V \to \mathbb{R}$. We say $f$ is
Lip(1)schitz if
$$f(x)-f(y)\leq d(x, y),$$
for each $x, y\in V$.
\end{definition}
By the duality theorem of a linear optimization problem, the transportation distance can also be written as follows:
\begin{equation}\label{equ:transdistance}
W(\mu_1, \mu_2)=\sup\limits_{f} \sum\limits_{x\in V} f(x)[\mu_1(x)-\mu_2(x)],
\end{equation}
where the supremum is taken over all Lip(1)schitz functions $f$.
We will call a $A\in V\times V$ satisfying the above infimum in equation \ref{equ:transcoupling} an optimal transportation plan and
call a $f\in Lip(1)$ satisfying the above supremum an optimal Kantorovich potential
transporting $\mu_1$ to $\mu_2$.

\begin{definition}\label{def:curvatures}
Let $G=(V, E, d, w)$ be a locally finite weighted graph associated with a weight distribution function $w$. Let $\mu_x^{\alpha}$ be the probability distribution function defined in equation (\ref{equ:defofdistri}) for any $0\leq \alpha \leq 1$.  For any $x, y\in V$, the $\alpha$-Ricci-curvature $\kappa_{\alpha}$ is defined  as
$$\kappa_{\alpha}(x, y)=1-\frac{W(\mu_x^{\alpha}, \mu_y^{\alpha})}{d(x, y)},  $$
where $W(\mu_x^{\alpha}, \mu_y^{\alpha})$ is the the Wasserstein distance transporting $\mu_x^{\alpha}$ to $\mu_y^{\alpha}$. 

The Ricci curvature $\kappa(x, y)$ is defined as
$$\kappa(x, y)=\lim\limits_{\alpha\to 1} \frac{k_{\alpha}(x, y)}{1-\alpha}. $$

The total curvature of $G$ is defined as
$$K(G)=\sum\limits_{xy\in E} \kappa(x, y). $$
\end{definition}

In the following  we state some basic properties of this generalized definition in the results Remark \ref{dnotintegerstilllinear} for $d$ restricted to a set of positive rational numbers and results in Theorem \ref{thm: lastlinear}.   These results are not logically necessary for this paper, the readers can skip this part and go directly to Theorem \ref{thm:curvatureviacoupling}. 

In the case of combinatorial graphs and $w=d=1$, $\kappa(x, y)$ is the Lin-Lu-Yau's curvature and $\kappa_{\alpha} (x, y)$ is Ollivier's curvature. 
In the $\alpha$-Ollivier-Ricci curvature,  for every edge $xy$ in $G$, the value $\alpha$ is called the {\it idleness},  and function $\alpha \to k_{\alpha}(x, y) $ is called the {\it Ollivier-Ricci idleness function}. The authors \cite{BCLMP} proved that the idleness function $\kappa_{\alpha}$ is a piece-wise linear function with at most three pieces. 
\begin{theorem}\cite{BCLMP}\label{thm:threelinearpiece}
Let $G=(V, E)$ be a locally finite graph. Let $x, y \in V$ such that $x\sim y$ and $d(x)\geq d(y)$. Then 
$\alpha \to k_{\alpha}(x, y) $ is a piece-wise linear function over $[0, 1]$ with at most 3 linear parts. 
Furthermore, $k_{\alpha}(x, y)$ is linear on $[0, \frac{1}{lcm(d(x), d(x))+1}]$ and is also linear on 
$[\frac{1}{\max(d(x), d(x))+1}, 1]$. Thus, if 
we have further condition $d(x)=d(y)$, then $k_{\alpha}(x, y)$ has at most two linear parts. 

\end{theorem}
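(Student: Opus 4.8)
The plan is to analyze $W(\alpha) := W(\mu_x^\alpha,\mu_y^\alpha)$ directly as a function of the idleness $\alpha$ and then transfer its structure to $\kappa_\alpha = 1 - W(\alpha)$ (here $d(x,y)=1$ since $G$ is combinatorial). Write $d_x := d(x)$, $d_y := d(y)$ for the two degrees, so that $d_x \ge d_y$ and $\max(d_x,d_y)=d_x$. The starting observation is that the signed measure $\rho_\alpha := \mu_x^\alpha - \mu_y^\alpha$ depends affinely on $\alpha$: from \eqref{equ:defofdistri},
\[
\rho_\alpha = \alpha(\delta_x - \delta_y) + (1-\alpha)(\mu_x^0 - \mu_y^0),
\]
where $\mu_x^0,\mu_y^0$ are the non-idle uniform walks on $\Gamma(x),\Gamma(y)$. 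By Kantorovich duality \eqref{equ:transdistance}, $W(\alpha) = \sup_{f} \sum_z f(z)\rho_\alpha(z)$ over $1$-Lipschitz $f$; fixing $f(x)=0$ and restricting to the finite support, which lies in $N(x)\cup N(y)$, the feasible $f$ form a bounded polytope, so the supremum is attained at one of its finitely many vertices. For each fixed $f$ the objective $\alpha(f(x)-f(y)) + (1-\alpha)\sum_z f(z)(\mu_x^0-\mu_y^0)(z)$ is affine in $\alpha$, so $W(\alpha)$ is a maximum of finitely many affine functions, hence convex and piecewise linear, and therefore $\kappa_\alpha = 1 - W(\alpha)$ is concave and piecewise linear on $[0,1]$. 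This already yields finitely many linear parts; the remaining and main task is to pin the number down to three and to locate the breakpoints.

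The core of the argument is to show that the optimal transport is combinatorially stable on each of the three intervals
\[
I_1 = \Bigl[0,\tfrac{1}{\mathrm{lcm}(d_x,d_y)+1}\Bigr],\quad I_2 = \Bigl[\tfrac{1}{\mathrm{lcm}(d_x,d_y)+1},\tfrac{1}{d_x+1}\Bigr],\quad I_3 = \Bigl[\tfrac{1}{d_x+1},1\Bigr],
\]
where $\tfrac{1}{d_x+1}=\tfrac{1}{\max(d_x,d_y)+1}$. For each $I_j$ I would exhibit a pair consisting of a coupling $A_j(\alpha)$ feasible for $(\mu_x^\alpha,\mu_y^\alpha)$ and a $1$-Lipschitz potential $f_j$, both of fixed combinatorial type throughout $I_j$ and with $\alpha$-dependence entering only through the transported masses, hence affinely. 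If the transport cost $\sum A_j(\alpha)\,d$ equals the dual value $\sum_z f_j(z)\rho_\alpha(z)$ on all of $I_j$, then complementary slackness certifies that $W$ agrees with a single affine function there, i.e. $\kappa_\alpha$ is linear on $I_j$; concatenating the three intervals gives at most three linear pieces. For $I_3$ the natural candidate is $f_3(z)=\tfrac12\bigl(d(z,y)-d(z,x)\bigr)$, which is $1$-Lipschitz and achieves the maximal slope $f_3(x)-f_3(y)=1$; its matching plan leaves the dominant center mass at $x$ in place and ships the residual surplus toward $y$ and the private $y$-neighbours. For $I_1$ the plan instead matches the nearly uniform neighbour masses of the two non-idle walks against each other, and for $I_2$ it interpolates between these two regimes.

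The transition values are forced by mass-balance thresholds. The upper threshold $\tfrac{1}{d_x+1}=\tfrac{1}{\max(d_x,d_y)+1}$ is exactly where the center mass $\alpha$ at the larger-degree endpoint $x$ meets its per-neighbour mass $(1-\alpha)/d_x$: above it the $I_3$-plan that keeps center mass in place is feasible and optimal, while below it part of that mass must be rerouted, changing the combinatorial type. The lower threshold involves $\mathrm{lcm}(d_x,d_y)$ because for small $\alpha$ the plan must simultaneously balance the two neighbour masses $\tfrac{1-\alpha}{d_x}$ and $\tfrac{1-\alpha}{d_y}$, whose common refinement has denominator $\mathrm{lcm}(d_x,d_y)$, and the type stays stable precisely until $\alpha$ reaches $\tfrac{1}{\mathrm{lcm}(d_x,d_y)+1}$. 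When $d_x=d_y=:d$ the two thresholds coincide, since $\mathrm{lcm}=\max=d$, the middle interval $I_2$ degenerates to a point, and we are left with at most two linear parts, which proves the final assertion.

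The main obstacle is the explicit construction and verification in the boundary and middle regimes: producing feasible couplings $A_j(\alpha)$ whose cost matches a genuine $1$-Lipschitz dual certificate on the whole of each $I_j$, and showing the combinatorial type can change only at the two claimed values and nowhere in the interiors. This requires a careful case analysis of how the surpluses and deficits at $x$, at $y$, at the shared neighbours $\Gamma(x)\cap\Gamma(y)$, and at the private neighbours are routed; in particular one must show that interior mass-crossings, such as the center-versus-neighbour crossing of the smaller-degree endpoint (which occurs strictly inside $I_3$, at $\tfrac{1}{d_y+1}$), do not actually alter the optimal plan. Concavity then serves only as a consistency check that the three slopes are non-increasing; the genuine work lies entirely in the optimal-transport bookkeeping that localizes the two breakpoints.
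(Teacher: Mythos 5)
Your opening reduction is sound: since $\mu_x^\alpha-\mu_y^\alpha$ is affine in $\alpha$ and the normalized $1$-Lipschitz potentials supported on $N(x)\cup N(y)$ form a bounded polytope, $W(\alpha)$ is a maximum of finitely many affine functions, so $\kappa_\alpha=1-W(\alpha)$ is concave and piecewise linear. But everything the theorem actually asserts beyond that --- the bound of \emph{three} pieces, and the linearity on $[0,\tfrac{1}{\mathrm{lcm}(d_x,d_y)+1}]$ and on $[\tfrac{1}{d_x+1},1]$ --- is deferred in your write-up to constructions you describe but do not carry out (``I would exhibit\dots'', ``the main obstacle is\dots''). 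The vertex count of the Lipschitz polytope can be far larger than three, so what you have actually established is only ``finitely many pieces.'' That is a genuine gap, not a presentational one.

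Moreover, the route you sketch (explicit primal--dual certificates of fixed combinatorial type on each of three intervals) omits the tool that makes the proof in \cite{BCLMP} work, and which this paper explicitly names as one of its two key ingredients: the integer-valuedness of optimal Kantorovich potentials, used together with complementary slackness (Lemma \ref{fu-fv=duv}). Once an optimal $f$ may be taken integer-valued with $f(y)=0$, the Lipschitz condition forces $f(x)\in\{-1,0,1\}$; writing the dual objective as $\alpha f(x)+(1-\alpha)\sum_z f(z)\bigl(\mu_x^0(z)-\mu_y^0(z)\bigr)$ and maximizing the $(1-\alpha)$-coefficient separately within each class $f(x)=c$ expresses $W(\alpha)$ as a maximum of at most three affine functions, which is precisely the ``at most three linear parts'' claim (the same counting gives the $2d(x,y)+1$ bound of Corollary \ref{coro:piecewiselinear}). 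Locating the two breakpoints at $\tfrac{1}{\mathrm{lcm}(d_x,d_y)+1}$ and $\tfrac{1}{\max(d_x,d_y)+1}$ does require the kind of mass-balance bookkeeping you outline, but without integer-valuedness your plan has no mechanism for capping the number of combinatorial types at three, and the steps you yourself identify as the core difficulty remain unexecuted.
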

One of two key ingredients of their proof in   \cite{BCLMP} is  the ``integer-valuedness" of optimal Kantorovich potentials which can be generalized to weighted graphs in our setting only if  the distance function $d$ is integer valued, the second one is the Complementary Slackness Theorem showing below which can be easily applied to weighted graphs. 

\begin{lemma}\cite{BCLMP}\label{fu-fv=duv}
Let $G = (V, E, d, w)$ be a locally finite weighted graph. Let $x, y \in  V$ with $x \sim y$. Let
$\alpha \in [0, 1]$. Let $A$ and $f$ be an optimal transport plan and an optimal Kantorovich potential transporting $\mu_x^{\alpha}$ to $\mu_y^{\alpha}$ respectively where $\mu_x^{\alpha}, \mu_y^{\alpha}, \kappa$ are defined in equation \ref{equ:defofdistri} and Def. \ref{def:curvatures}. Let $u, v \in  V$ with $A(u, v) \neq  0$. Then
$$ f(u) - f(v) = d(u, v).$$
\end{lemma}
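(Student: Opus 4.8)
The plan is to obtain the identity $f(u)-f(v)=d(u,v)$ as the complementary slackness condition for the primal--dual pair of linear programs that define $W(\mu_x^{\alpha},\mu_y^{\alpha})$, namely the coupling formulation in equation (\ref{equ:transcoupling}) and its Kantorovich dual in equation (\ref{equ:transdistance}). Since $A$ is optimal for the primal and $f$ is optimal for the dual, strong LP duality gives that both objectives equal $W(\mu_x^{\alpha},\mu_y^{\alpha})$; the content of the lemma is that this equality of values forces every edge carrying positive transported mass to be ``tight'' for the Lip(1) constraint.

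The key step is to rewrite the dual objective using the marginal constraints $\sum_{v\in V}A(u,v)=\mu_x^{\alpha}(u)$ and $\sum_{u\in V}A(u,v)=\mu_y^{\alpha}(v)$ built into the definition of a coupling. Substituting these into the supremum expression for $W$ and regrouping gives
\begin{align*}
W(\mu_x^{\alpha},\mu_y^{\alpha})
&= \sum_{u\in V} f(u)\big[\mu_x^{\alpha}(u)-\mu_y^{\alpha}(u)\big] \\
&= \sum_{u\in V} f(u)\sum_{v\in V}A(u,v) \;-\; \sum_{v\in V} f(v)\sum_{u\in V}A(u,v) \\
&= \sum_{u,v\in V} A(u,v)\big[f(u)-f(v)\big].
\end{align*}
On the other hand, optimality of $A$ gives $W(\mu_x^{\alpha},\mu_y^{\alpha})=\sum_{u,v\in V}A(u,v)\,d(u,v)$, so subtracting the two expressions yields
\begin{equation*}
\sum_{u,v\in V} A(u,v)\big[d(u,v)-\big(f(u)-f(v)\big)\big]=0.
\end{equation*}

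To finish, I would observe that every summand is nonnegative: the coupling satisfies $A(u,v)\geq 0$, and the Lip(1) property of $f$ (Definition \ref{Lipschitz}) gives $f(u)-f(v)\leq d(u,v)$, so $d(u,v)-(f(u)-f(v))\geq 0$. A sum of nonnegative terms that vanishes must have every term equal to zero; hence for each pair $(u,v)$ with $A(u,v)\neq 0$ we get $d(u,v)-(f(u)-f(v))=0$, which is exactly $f(u)-f(v)=d(u,v)$. The only point requiring care is that all these manipulations (interchanging the order of summation and subtracting the two series) are legitimate because $A$ and $f$ have finite support by hypothesis, so every sum is finite; this is the sole technical obstacle and it is a mild one. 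I would also note that strong duality, invoked above to assert that $A$ and $f$ share the common optimal value $W$, is precisely the duality theorem already quoted between equations (\ref{equ:transcoupling}) and (\ref{equ:transdistance}), so no new optimization machinery is needed.
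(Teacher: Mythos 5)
Your proof is correct: it is the standard complementary-slackness argument, rewriting the dual objective via the marginal constraints of the coupling to get $\sum_{u,v}A(u,v)\bigl[d(u,v)-(f(u)-f(v))\bigr]=0$ and then using nonnegativity of each summand. The paper itself gives no proof of this lemma, deferring entirely to the cited reference \cite{BCLMP}, and your argument is precisely the one used there; the only tiny imprecision is that $f$ need not have finite support --- finiteness of all the sums comes from the finite support of $A$ and of $\mu_x^{\alpha}-\mu_y^{\alpha}$, which is all you actually need.
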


In the following we assume the distance function $d$ is integer-valued. 
\begin{corollary}
Let $G = (V, E, d)$ be a locally finite weighted graph  with integer valued function $d$.  Let $f\in$ Lip(1), then $\floor f, \ceil f\in$ Lip(1). 
\end{corollary}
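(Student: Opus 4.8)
The plan is to reduce the corollary to a single elementary fact about floors and integers, the whole point being that the hypothesis forces $d(x,y)$ to be an integer. Fix $x,y\in V$. Since $f\in\mathrm{Lip}(1)$ we have $f(x)-f(y)\le d(x,y)$, and because the defining inequality is symmetric in the two arguments we also have $f(y)-f(x)\le d(x,y)$. The core observation I would isolate as a small claim is the following: \emph{if $a,b\in\R$ and $n\in\Z$ satisfy $a-b\le n$, then $\floor{a}-\floor{b}\le n$.} This is where integer-valuedness of $d$ is genuinely used, and it is essentially the only nontrivial content.

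To prove the claim I would use $\floor{a}\le a$ together with $\floor{b}>b-1$, giving
\[
\floor{a}-\floor{b} < a-b+1 \le n+1 .
\]
Now $\floor{a}-\floor{b}$ is an integer that is strictly less than the integer $n+1$, hence $\floor{a}-\floor{b}\le n$. I would record the parallel inequality $\ceil{a}-\ceil{b}\le n$ as well, proved in the same way from $\ceil{a}<a+1$ and $\ceil{b}\ge b$.

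Applying the claim with $a=f(x)$, $b=f(y)$ and $n=d(x,y)$ (an integer by assumption) yields $\floor{f(x)}-\floor{f(y)}\le d(x,y)$; since $x,y$ were arbitrary, $\floor{f}\in\mathrm{Lip}(1)$. For the ceiling I would either run the identical argument with the second elementary inequality, or observe that $-f$ is again Lip(1) (this is exactly the symmetric inequality $f(y)-f(x)\le d(x,y)$) and that $\ceil{f}=-\floor{-f}$; thus $\ceil{f}$ is the negation of the Lip(1) function $\floor{-f}$, and negation visibly preserves the Lip(1) property, so $\ceil{f}\in\mathrm{Lip}(1)$.

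I do not anticipate a real obstacle. The only delicate step is the passage from the strict bound $\floor{a}-\floor{b}<n+1$ to $\floor{a}-\floor{b}\le n$, and that passage is valid precisely because $n=d(x,y)$ is an integer. Accordingly, in the write-up I would flag clearly that this is the single place where the hypothesis of integer-valued $d$ is invoked, since the conclusion genuinely fails without it (for a non-integer distance the floor can jump by a full unit while $f$ changes by an arbitrarily small amount).
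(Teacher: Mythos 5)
Your proof is correct and follows essentially the same route as the paper's: both arguments bound $\floor{f(x)}-\floor{f(y)}$ strictly below $d(x,y)+1$ (the paper via the fractional parts $\delta_v=f(v)-\floor{f(v)}$, you via $\floor{a}\le a$ and $\floor{b}>b-1$) and then invoke integrality of $d$ to round the strict bound down to $d(x,y)$. Your explicit isolation of the integer-rounding step and the remark that it is the only place integer-valuedness is used is a nice touch, but the underlying argument is the same.
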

\begin{proof}
For each $v\in V$, set $\delta_v=f(v)-\floor f(v)$, then $\delta_v\in [0, 1)$ and for any $w\in V$, $\delta_v-\delta_w\in (-1, 1)$. We have 
\begin{equation*}
    |\floor f(v) -\floor f(w)| = |f(v)-\delta_v -f(w)+\delta_w| \leq d(v, w) + |\delta_v-\delta_w| < d(v, w) + 1. \end{equation*}
Since $d$ is a integer-valued function, then $|\floor f(v) -\floor f(w)| \leq  d(v, w)$. Thus $\floor f \in Lip(1)$. The proof that $\ceil f\in Lip(1)$ follows similarly.
\end{proof}

\begin{lemma}(Integer-Valuedness)\cite{BCLMP}
Let $G = (V, E, d, w)$ be a locally finite weighted graph with integer valued function $d$. Let $x, y \in  V$ with $x \sim y$. Let
$\alpha \in [0, 1]$.  Then there exists $f \in $Lip(1) such that
$$W(\mu_x^{\alpha}, \mu_y^{\alpha})=\sup\limits_{f} \sum\limits_{u\in V} f(x)[\mu_x^{\alpha}(u)-\mu_y^{\alpha}(u)],$$
and $f(u)$ is an integer-valued function for all $u\in V$. 
\end{lemma}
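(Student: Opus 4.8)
The plan is to start from an arbitrary optimal Kantorovich potential guaranteed by duality and to round it to integer values without destroying either feasibility or optimality, using the preceding Corollary as the crucial tool that rounding preserves the Lip(1) condition. First I would fix, via the Kantorovich duality \eqref{equ:transdistance}, a Lip(1) function $f$ attaining the supremum, so that $W(\mu_x^\alpha,\mu_y^\alpha)=\Phi(f)$, where I abbreviate the linear dual objective as $\Phi(g):=\sum_{u\in V} g(u)\,[\mu_x^\alpha(u)-\mu_y^\alpha(u)]$. Since $G$ is locally finite, only finitely many terms of this sum are nonzero: every relevant $u$ lies in the finite support $S=N(x)\cup N(y)$, which makes all the interchanges below trivially legitimate.

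Next I would introduce a threshold-rounded family. For $t\in[0,1)$ set $g_t:=\floor{f+t}$, that is, $g_t(u)=\floor{f(u)+t}$ for every $u\in V$. Each $g_t$ is integer-valued by construction. It is also Lip(1): adding the constant $t$ preserves Definition \ref{Lipschitz}, so $f+t$ is Lip(1), and then $\floor{f+t}=g_t$ is Lip(1) by the Corollary above (this is precisely where integrality of $d$ is invoked). Hence every $g_t$ is a feasible competitor in the dual problem, giving the one-sided bound $\Phi(g_t)\le\Phi(f)$ for all $t$.

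The heart of the argument is an averaging identity. Writing $\delta_u:=f(u)-\floor{f(u)}\in[0,1)$, one has $g_t(u)=\floor{f(u)}$ for $t\in[0,1-\delta_u)$ and $g_t(u)=\floor{f(u)}+1$ for $t\in[1-\delta_u,1)$, so that an elementary computation yields
\[
\int_0^1 g_t(u)\,dt=\floor{f(u)}\,(1-\delta_u)+(\floor{f(u)}+1)\,\delta_u=\floor{f(u)}+\delta_u=f(u).
\]
Because the sum defining $\Phi$ is finite, I may interchange it with the integral to get
\[
\int_0^1 \Phi(g_t)\,dt=\sum_{u\in V}\Big(\int_0^1 g_t(u)\,dt\Big)[\mu_x^\alpha(u)-\mu_y^\alpha(u)]=\sum_{u\in V}f(u)\,[\mu_x^\alpha(u)-\mu_y^\alpha(u)]=\Phi(f).
\]
Now I combine the two facts: the integrand $\Phi(g_t)$ is bounded above by $\Phi(f)$ pointwise, yet its average over $[0,1]$ equals $\Phi(f)$, so $\Phi(g_t)=\Phi(f)$ for almost every $t$. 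Concretely, as $t$ crosses the finitely many breakpoints $\{1-\delta_u: u\in S\}$, the restriction $g_t|_S$ takes only finitely many values, so $\Phi(g_t)$ is a step function; every value it assumes on a positive-length subinterval must therefore equal the maximum $\Phi(f)$. Choosing any such $t$ produces an integer-valued $f\in$ Lip(1) with $\Phi(f)=W(\mu_x^\alpha,\mu_y^\alpha)$, as required.

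I expect the only genuinely delicate point to be the feasibility preservation — that the rounded functions $\floor{f+t}$ stay Lip(1) on all of $V$ — but this is exactly the content of the preceding Corollary, and it is the single place where the hypothesis that $d$ is integer-valued is essential. Everything else (the elementary integral evaluation, the finite-support interchange, and the averaging inequality) is routine.
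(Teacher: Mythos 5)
Your proof is correct and complete, but it follows a genuinely different route from the one the paper points to: the paper omits the argument entirely and defers to Lemma~3.2 of \cite{BCLMP}, whose proof goes through the \emph{primal} side --- one takes an optimal transport plan $A$ together with an optimal potential $f$, invokes complementary slackness (Lemma~\ref{fu-fv=duv}) to get $f(u)-f(v)=d(u,v)\in\mathbb{Z}$ whenever $A(u,v)\neq 0$, deduces that the fractional parts $\delta_u,\delta_v$ agree on the support of $A$, and concludes that the single rounding $\floor{f}$ already pairs exactly with $A$ and is therefore optimal. Your threshold-rounding argument ($g_t=\floor{f+t}$, $\int_0^1 g_t(u)\,dt=f(u)$, hence $\Phi(g_t)=\Phi(f)$ for some $t$) stays entirely on the dual side: it never needs an optimal plan or complementary slackness, only the attainment of the supremum in \eqref{equ:transdistance} and the fact that $\floor{\cdot}$ preserves Lip(1) when $d$ is integer-valued (the preceding corollary). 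That makes your proof more self-contained within this paper's toolkit, at the mild cost of the measure-theoretic averaging step and of having to note that $\Phi(g_t)$ is a step function of $t$ with finitely many breakpoints on the finite support $N(x)\cup N(y)$. The one point you take for granted, as does the paper throughout, is that the dual supremum is actually attained by some Lip(1) function; this is standard (normalize $f(y)=0$ and use compactness on the finite support), but if you want the argument fully airtight you should say a word about it. Otherwise every step --- feasibility of $f+t$ and of $g_t$, the identity $\int_0^1\floor{f(u)+t}\,dt=f(u)$, and the interchange of the finite sum with the integral --- checks out.
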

\begin{proof}
The proof is omit, please refer to  Lemma 3.2 of \cite{BCLMP}. 
\end{proof}

\begin{corollary}\label{coro:piecewiselinear}
Let $G = (V, E, d, w)$ be a locally finite weighted graph  with integer valued function $d$. Let $x, y \in  V$ with $x \sim y$. Let $\alpha \in [0, 1]$,  $k_{\alpha}(x, y)$ is defined in Def. \ref{def:curvatures}, then $\alpha\to k_{\alpha}(x, y)$ is piece-wise linear over $[0, 1]$  with at most $2d(x, y)+1$ linear parts.
\end{corollary}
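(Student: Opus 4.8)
The plan is to reduce the piecewise-linearity of $\alpha\mapsto k_\alpha(x,y)$ to that of the Wasserstein distance and then exploit the duality formula (\ref{equ:transdistance}) together with the Integer-Valuedness lemma. Since $k_\alpha(x,y)=1-W(\mu_x^\alpha,\mu_y^\alpha)/d(x,y)$ and $d(x,y)$ is a fixed positive integer, it suffices to show that $\alpha\mapsto W(\mu_x^\alpha,\mu_y^\alpha)$ is piecewise linear with at most $2d(x,y)+1$ linear parts. The first observation is that for a fixed $\mathrm{Lip}(1)$ function $f$ the quantity $g_f(\alpha):=\sum_{z}f(z)[\mu_x^\alpha(z)-\mu_y^\alpha(z)]$ is affine in $\alpha$: by (\ref{equ:defofdistri}) each $\mu_x^\alpha(z)$ and $\mu_y^\alpha(z)$ is affine in $\alpha$, and a short computation gives \[ g_f(\alpha)=\alpha\,(f(x)-f(y))+(1-\alpha)(\bar f_x-\bar f_y), \] where $\bar f_x=\sum_{z\sim x}\frac{w_{xz}}{D_x}f(z)$ and $\bar f_y$ is defined analogously. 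In particular $g_f(1)=f(x)-f(y)$.

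By the duality formula (\ref{equ:transdistance}) and the Integer-Valuedness lemma, for each $\alpha$ the supremum defining $W(\mu_x^\alpha,\mu_y^\alpha)$ is attained by an integer-valued $f\in\mathrm{Lip}(1)$; hence $W(\mu_x^\alpha,\mu_y^\alpha)=\max_f g_f(\alpha)$, the maximum being over integer-valued $\mathrm{Lip}(1)$ functions. Because $g_f$ depends only on the restriction of $f$ to the finite set $N(x)\cup N(y)$ and is unchanged when a constant is added to $f$, I may normalize $f(x)=0$; local finiteness then leaves only finitely many relevant restrictions, so $W(\mu_x^\alpha,\mu_y^\alpha)$ is the pointwise maximum of finitely many affine functions of $\alpha$, and is therefore convex and piecewise linear.

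The crucial step — and the source of the bound $2d(x,y)+1$ — is to group the lines $g_f$ according to the integer value $c:=f(x)-f(y)$. Since $f$ is $\mathrm{Lip}(1)$ and integer valued and $d(x,y)$ is an integer, $c$ ranges over the $2d(x,y)+1$ integers in $\{-d(x,y),\dots,d(x,y)\}$. All lines with a common value $c$ satisfy $g_f(1)=c$, i.e. they pass through the single point $(1,c)$; writing such a line as $g_f(\alpha)=c+m_f(\alpha-1)$ and noting that $\alpha-1\le 0$ on $[0,1]$, the maximum over this group is attained, for every $\alpha\in[0,1)$ simultaneously, by the line of smallest slope $m_f$, so the group contributes a single affine function $L_c$. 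Consequently \[ W(\mu_x^\alpha,\mu_y^\alpha)=\max_{-d(x,y)\le c\le d(x,y)}L_c(\alpha) \] is the maximum of at most $2d(x,y)+1$ affine functions, which has at most $2d(x,y)+1$ linear pieces; transferring through $k_\alpha=1-W(\mu_x^\alpha,\mu_y^\alpha)/d(x,y)$ completes the proof.

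I expect the main obstacle to be verifying cleanly that each value of $c=f(x)-f(y)$ contributes only one line: one must argue that the common incidence of these lines at $(1,c)$ forces their restriction to $[0,1]$ to have a single upper envelope (because minimizing $m_f$ maximizes $m_f(\alpha-1)$ uniformly over all $\alpha\in[0,1)$), rather than merely invoking convexity to bound slopes. The remaining points — affineness of $g_f$, finiteness of the relevant restrictions of $f$, and the reduction from $k_\alpha$ to $W$ — are routine.
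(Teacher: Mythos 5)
Your proof is correct and takes essentially the same route as the paper's, which only sketches the argument and defers to Theorem 3.3 of \cite{BCLMP}: integer-valuedness of an optimal Kantorovich potential forces $c=f(x)-f(y)$ into the $2d(x,y)+1$ integers of $[-d(x,y),d(x,y)]$, and each value of $c$ contributes a single affine function to the upper envelope of the finitely many lines $g_f$. Your observation that all lines with a common $c$ pass through $(1,c)$, so that on $[0,1]$ the group collapses to its minimal-slope member, cleanly supplies exactly the step the paper omits.
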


\begin{proof}
Let $f$ be an optimal Kantorovich potential with $f(y)=0$, then $f(x)$ could take at most $2d(x, y)+1$ integer values to satisfy $|f(x)-f(y)|\leq d(x, y)$. The proof is omit, refer to Theorem 3.3 in \cite{BCLMP}. 
\end{proof}

\begin{remark}\label{dnotintegerstilllinear}
Note if we assume $d$ is a set of positive rational numbers, we can re-scale the distance function $d$, for example by a multiple of $10$,  such that the $d$ is integer-valued, the curvature $\kappa(x, y)$ will not change by such a scaling. Thus $\alpha\to k_{\alpha}(x, y)$ is still a piece-wise linear function over $[0, 1]$.  
\end{remark}

\begin{theorem}\label{thm: lastlinear}
Let $G = (V, E, d, w)$ be a locally finite weighted graph associated by a weight distribution function $w$. Assume $d$ satisfies ``Treelike" explained in Section \ref{sec:sum}. For any  $x\sim  y$ with $D_x\geq D_y$, let
$\alpha \in (\frac{w_{xy}}{w_{xy}+D_x}, 1]$.  Let $f$ be an optimal Kantorovich potential transporting $\mu_x^{\alpha}$ to $\mu_y^{\alpha}$. Then 
$f(x)-f(y)=d(x, y)$. 
And $\alpha\to \kappa_{\alpha}(x, y)$ is linear in $[\frac{w_{xy}}{w_{xy}+D_x}, 1]$. 
\end{theorem}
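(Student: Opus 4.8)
The plan is to prove the two assertions in sequence, since the linearity statement will follow from the potential identity $f(x)-f(y)=d(x,y)$ once I record how $W(\mu_x^{\alpha},\mu_y^{\alpha})$ decomposes in $\alpha$. Throughout I write $\mathbf{1}_u$ for the point mass at $u$ and $\nu_x$ for the one-step neighbor distribution, $\nu_x(z)=w_{xz}/D_x$ for $z\sim x$ and $\nu_x(z)=0$ otherwise, so that $\mu_x^{\alpha}=\alpha\mathbf{1}_x+(1-\alpha)\nu_x$ and likewise for $y$; note that $\nu_x,\nu_y$ do not depend on $\alpha$.

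For the identity, I would fix any optimal transport plan $A$ together with the given optimal potential $f$ and argue by complementary slackness (Lemma \ref{fu-fv=duv}). The role of the hypothesis $\alpha>\frac{w_{xy}}{w_{xy}+D_x}$ is precisely to make $y$ a strict sink: a short computation shows it is equivalent to $\mu_y^{\alpha}(y)=\alpha>(1-\alpha)\frac{w_{xy}}{D_x}=\mu_x^{\alpha}(y)$, so that $\sum_{u\neq y}A(u,y)=\mu_y^{\alpha}(y)-A(y,y)\geq \mu_y^{\alpha}(y)-\mu_x^{\alpha}(y)>0$. Hence there is some $u\neq y$ in $\operatorname{supp}\mu_x^{\alpha}=\{x\}\cup\Gamma(x)$ with $A(u,y)>0$, and Lemma \ref{fu-fv=duv} gives $f(u)-f(y)=d(u,y)$. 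If $u=x$ we are done immediately.

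The only remaining case, and the step I expect to be the crux, is when the mass entering $y$ comes not from $x$ but from a neighbor $z\in\Gamma(x)\setminus\{y\}$; this genuinely occurs when $x$ is itself a sink, i.e.\ when $\frac{w_{xy}}{w_{xy}+D_x}<\alpha<\frac{w_{xy}}{w_{xy}+D_y}$. Here I would invoke the Treelike hypothesis, which for the edge $xy$ yields the additive relation $d(z,y)=d(z,x)+d(x,y)$ for every $z\in\Gamma(x)\setminus\{y\}$. Combining the complementary-slackness equality $f(z)-f(y)=d(z,y)$ with the two Lipschitz bounds $f(z)-f(x)\leq d(z,x)$ and $f(x)-f(y)\leq d(x,y)$, whose sum is $f(z)-f(y)\leq d(z,x)+d(x,y)=d(z,y)$, forces both inequalities to be equalities; in particular $f(x)-f(y)=d(x,y)$, as desired. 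This disposes of the identity for every optimal $f$.

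For the linearity on $[\frac{w_{xy}}{w_{xy}+D_x},1]$, I would feed the identity into the Kantorovich dual (\ref{equ:transdistance}). Writing $g(\alpha)=W(\mu_x^{\alpha},\mu_y^{\alpha})$ and using $\mu_x^{\alpha}-\mu_y^{\alpha}=\alpha(\mathbf{1}_x-\mathbf{1}_y)+(1-\alpha)(\nu_x-\nu_y)$, any optimal potential $f_\alpha$ satisfies
\[
g(\alpha)=\alpha\bigl(f_\alpha(x)-f_\alpha(y)\bigr)+(1-\alpha)\langle f_\alpha,\nu_x-\nu_y\rangle=\alpha\,d(x,y)+(1-\alpha)\langle f_\alpha,\nu_x-\nu_y\rangle,
\]
the second equality by the identity just proved. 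Let $M=\sup\{\langle f,\nu_x-\nu_y\rangle: f\in \mathrm{Lip}(1),\ f(x)-f(y)=d(x,y)\}$, an $\alpha$-independent quantity. Since each $f_\alpha$ is feasible for this constrained problem, $g(\alpha)\leq \alpha\,d(x,y)+(1-\alpha)M$; conversely, a maximizer $f^{\ast}$ of the constrained problem is $\mathrm{Lip}(1)$ with $f^{\ast}(x)-f^{\ast}(y)=d(x,y)$, hence admissible in (\ref{equ:transdistance}) and giving $g(\alpha)\geq \alpha\,d(x,y)+(1-\alpha)M$. Therefore $g(\alpha)=\alpha\,d(x,y)+(1-\alpha)M$ is affine on the whole interval, so $\kappa_{\alpha}(x,y)=1-g(\alpha)/d(x,y)$ is linear there. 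The main obstacle is the Treelike case analysis above; once that additivity is in hand, both the identity and the linearity are essentially forced by duality.
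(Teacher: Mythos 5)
Your proof of the identity $f(x)-f(y)=d(x,y)$ is essentially the paper's own argument: use $\alpha>\frac{w_{xy}}{w_{xy}+D_x}$ to make $y$ a strict sink, extract $z$ with $A(z,y)>0$, and combine the complementary slackness equality $f(z)-f(y)=d(z,y)$ with the Treelike additivity $d(z,y)=d(z,x)+d(x,y)$ to force equality in the Lipschitz bounds. For the linearity claim the paper simply defers to Theorem 4.4 of \cite{BCLMP}, whereas you supply the duality argument explicitly via the affine formula $W(\mu_x^{\alpha},\mu_y^{\alpha})=\alpha\,d(x,y)+(1-\alpha)M$; this is the standard route and is correct, up to the harmless point that your formula is established on the open interval and extends to the left endpoint $\frac{w_{xy}}{w_{xy}+D_x}$ by continuity (convexity) of $\alpha\mapsto W(\mu_x^{\alpha},\mu_y^{\alpha})$.
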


\begin{proof}[Proof of Theorem \ref{thm: lastlinear}:]
Let $A$ be an optimal transport plan and $f$ be an optimal Kantorovich potential transporting $\mu_x^{\alpha}$ to $\mu_y^{\alpha}$. We only need to prove $f(x)-f(y)=d(x, y)$ and the rest is just similar as shown in  \cite{BCLMP}.  Since $\alpha> \frac{w_{xy}}{w_{xy}+D_x}$ and $D_x\geq D_y$, $\mu_x^{\alpha}(y)=\frac{(1-\alpha)w_{xy}}{D_x}< \alpha=\mu_y^{\alpha}(y)$, thus there exist vertex $z$ such that $A(z, y)>0$. If $z=x$ then $f(x)-f(y)=d(x, y)$ by Lemma \ref{fu-fv=duv}. Since $d$ satisfies ``Treelike", then there is no case $z\sim x$ and $z\sim y$, the only case left is consider $z\sim x$ and $z\neq x$. Then we have $f(z)-f(y)=d(z, y)$. Again by ``Treelike" $d(z, y)=d(z, x)+d(x, y)$. On the other hand, we have $f(z)-f(y)=f(z)-f(x) +f(x)-f(y) \leq d(z, x) +f(x)-f(y)$ which implies $f(x)-f(y)=d(x, y)$. 
The prove for the rest of theorem is similar as in Theorem 4.4 in \cite{BCLMP}. 
\end{proof}

If all distances are rational numbers in a weighted graph, with Remark \ref{dnotintegerstilllinear} and Theorem \ref{thm: lastlinear}, it is possible to compute the edge curvature by choosing $\alpha$ to be a value closer to $1$ in our settings.
\hfill

M\"{u}nch and Wojciechowski \cite{FlorentinRadoslaw} proposed a different generalized version of Lin-Lu-Yau Ricci curvature on weighted graph and also  expressed the curvature without a limit using graph Laplacian operator. What is different from our definition is all distances involved in their definition is 
the combinatorial distance,   i.e.  the distance  between any two vertices $x$ and $y$ is the minimum number of edges connecting $x$ and $y$. 
Now we briefly rephrase their probability distribution function and the result using our notations, note we  use $d'(x, y)$ to indicate the combinatorial distance, use $w(x, y)$ to represent the edge weight distribution.  

\begin{definition}\cite{FlorentinRadoslaw}
Let $G=(V, E, w)$ be a weighted graph with edge weight function $w$.
The probability distribution $\mu_x^{\alpha}$ be defined as:
\begin{align}\label{equ:defofdistriinotherpaper}
\mu_x^{\alpha}(z)=
\begin{cases}
 \alpha,  & \text{if $z=x$}, \\
 (1-\alpha)\frac{w(x, z)}{\sum\limits_{z\sim x} w(x, z)},  & \text{if $z\sim x$},\\
 0,& \text{otherwise}.
\end{cases}
\end{align}

For any  function $f: V\to \mathbb{R}$, the graph Laplacian $\Delta$ is defined by: 
$$\Delta f(x)=\frac{1}{\sum\limits_{z\sim x} w(x, z)}\sum\limits_{y\in V} w(x, y) (f(y)-f(x)). $$ 

And any two vertices $x, y$, let 
$$\nabla_{xy}f=\frac{f(x)-f(y)}{d'(x,y)}.  $$
\end{definition}

\begin{theorem}\label{thm:curvature_laplacian}\cite{FlorentinRadoslaw} (Curvature via the Laplacian) Let $G=(V, E, w)$ be a weighted graph with edge weight function $w$,  let $\mu_x^{\alpha}$ be the weight distribution function defined in Equation \ref{equ:defofdistriinotherpaper},  then for $x \neq y \in V(G)$, 
\begin{equation*}
    \kappa(x,y) = \inf_{\substack{f \in Lip(1)\\ \nabla_{yx}f = 1}} \nabla_{xy} \Delta f
\end{equation*}
\end{theorem}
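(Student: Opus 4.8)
The plan is to reduce the limit defining $\kappa(x,y)$ to a one-sided derivative of the Wasserstein distance in the idleness parameter, and to evaluate that derivative through the Kantorovich duality \eqref{equ:transdistance}, where the Laplacian appears automatically. First I would expand the dual objective: for any $f\in Lip(1)$, expanding the distribution \eqref{equ:defofdistriinotherpaper} gives
$$\sum_{z} f(z)\,\mu_x^\alpha(z) = \alpha f(x) + (1-\alpha)\,\frac{\sum_{z\sim x}w(x,z)f(z)}{\sum_{z\sim x}w(x,z)} = f(x) + (1-\alpha)\,\Delta f(x),$$
since $\frac{\sum_{z\sim x}w(x,z)f(z)}{\sum_{z\sim x}w(x,z)} = \Delta f(x)+f(x)$ by definition of $\Delta$. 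Subtracting the same identity at $y$ and taking the supremum yields
$$W(\mu_x^\alpha,\mu_y^\alpha) = \sup_{f\in Lip(1)}\Big[\big(f(x)-f(y)\big) + (1-\alpha)\big(\Delta f(x)-\Delta f(y)\big)\Big].$$

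Next, writing $\eps=1-\alpha$ and letting $h(\eps)$ denote the right-hand supremum, I would observe that $h$ is a supremum of functions affine in $\eps$, hence convex, and that $h(0)=\sup_{f\in Lip(1)}(f(x)-f(y))=d'(x,y)$ because $f$ is $1$-Lipschitz (attained at $f(v)=d'(v,y)$). Consequently
$$\frac{\kappa_\alpha(x,y)}{1-\alpha} = \frac{d'(x,y)-h(\eps)}{\eps\,d'(x,y)} = -\frac{1}{d'(x,y)}\cdot\frac{h(\eps)-h(0)}{\eps} \xrightarrow[\eps\to0^+]{} -\frac{h'_+(0)}{d'(x,y)},$$
the right derivative existing by convexity. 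It then remains to identify $h'_+(0)$ with a supremum over the maximizers of $h(0)$, i.e. the $f\in Lip(1)$ with $f(x)-f(y)=d'(x,y)$, obtaining $h'_+(0)=\sup_{f:\,f(x)-f(y)=d'(x,y)}(\Delta f(x)-\Delta f(y))$. Rewriting $-h'_+(0)/d'(x,y)$ as $\frac{1}{d'(x,y)}\inf_{f:\,f(x)-f(y)=d'(x,y)}(\Delta f(y)-\Delta f(x))$ and substituting $f\mapsto -f$ (which preserves $Lip(1)$, turns the constraint into $\nabla_{yx}f=1$, and sends $\Delta f(y)-\Delta f(x)$ to $\Delta f(x)-\Delta f(y)$) yields exactly $\inf_{f\in Lip(1),\,\nabla_{yx}f=1}\nabla_{xy}\Delta f$, the claimed formula.

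The hard part will be the interchange of the limit $\eps\to0^+$ with the supremum, i.e. proving this envelope identity for $h'_+(0)$ in an infinite-dimensional, non-smooth setting. The inequality $h'_+(0)\ge \sup\{\Delta f(x)-\Delta f(y):f(x)-f(y)=d'(x,y)\}$ is immediate from the lower bound $h(\eps)\ge d'(x,y)+\eps\big(\Delta f(x)-\Delta f(y)\big)$ for each admissible $f$, but the reverse requires showing that near-optimizers for small $\eps$ concentrate on the maximizers of $h(0)$. I would handle this by a compactness reduction: the objective depends only on the values of $f$ on the finite set $N(x)\cup N(y)$, and since adding a constant leaves both $f(x)-f(y)$ and $\Delta f(x)-\Delta f(y)$ unchanged, I may normalize $f(y)=0$; the pairwise $1$-Lipschitz constraints on $N(x)\cup N(y)$ then cut out a compact polytope, and every feasible point extends to a global $Lip(1)$ function via the McShane formula $\tilde f(v)=\min_{s}\big(f(s)+d'(v,s)\big)$. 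On this compact domain the supremum is attained and Danskin's theorem gives $h'_+(0)=\max_{f:\,f(x)-f(y)=d'(x,y)}(\Delta f(x)-\Delta f(y))$ directly.

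Alternatively, and more in the spirit of the present paper, I would invoke the piecewise-linearity of $\alpha\mapsto\kappa_\alpha(x,y)$ near $\alpha=1$ established in Corollary \ref{coro:piecewiselinear} and Theorem \ref{thm: lastlinear} (the combinatorial distance $d'$ is integer-valued, so these apply): on the final linear piece an optimal Kantorovich potential already satisfies $f(x)-f(y)=d'(x,y)$, so the slope of that piece equals $-h'_+(0)/d'(x,y)$ with the supremum restricted to such $f$, bypassing the envelope argument entirely. Either route completes the identification of $\kappa(x,y)$ with $\inf_{\,f\in Lip(1),\,\nabla_{yx}f=1}\nabla_{xy}\Delta f$.
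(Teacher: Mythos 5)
Your proposal is essentially correct, but note that the paper itself gives no proof of this theorem: it is imported verbatim from \cite{FlorentinRadoslaw} by citation (as is Corollary \ref{cor:curvature_laplacian}), so there is no in-paper argument to compare against. Your route --- expanding the Kantorovich dual to get $W(\mu_x^\alpha,\mu_y^\alpha)=\sup_{f\in Lip(1)}\big[(f(x)-f(y))+(1-\alpha)(\Delta f(x)-\Delta f(y))\big]$, observing that this is a convex function of $\eps=1-\alpha$, and identifying the one-sided derivative at $\eps=0$ via a Danskin-type envelope theorem --- is sound and is in the same spirit as the source's proof. The reduction to a compact polytope is handled correctly: the objective depends only on $f\big|_{N(x)\cup N(y)}$, the normalization $f(y)=0$ bounds the feasible values, and the McShane extension shows the restricted supremum equals the global one; the sign bookkeeping in the final substitution $f\mapsto -f$ also checks out, and as a bonus the monotonicity of the difference quotients shows the defining limit for $\kappa(x,y)$ actually exists. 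One caveat: your proposed ``alternative route'' via Theorem \ref{thm: lastlinear} does not work in the stated generality, since that theorem assumes the ``Treelike'' property of $d$ and only treats adjacent pairs $x\sim y$, whereas the theorem here is asserted for arbitrary $x\neq y$; the piecewise-linearity of Corollary \ref{coro:piecewiselinear} alone tells you the limit is eventually a slope but not that the optimal potential on the last piece satisfies $f(x)-f(y)=d'(x,y)$. Your primary (Danskin) argument does not need that shortcut, so the proof stands on the first route alone.
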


Although the distance in \cite{FlorentinRadoslaw} is different, the proof still works in our setting when $d(x, y)$ is the weighted distance. 
\begin{corollary}\label{cor:curvature_laplacian}
Let $G=(V, E, d, w)$ be a weighted graph with edge weight function $d, w$. For any vertex $x\in V$,  let $\mu_x^{\alpha}$ be defined in expression \ref{equ:defofdistriinotherpaper}. Define $\nabla_{xy}f=\frac{f(x)-f(y)}{d(x,y)},$ where  $d(x, y)$ is the weighted distance. Let $\kappa$ be defined in Def. \ref{def:curvatures}, 
then for any $x \neq y \in V(G)$, 
\begin{equation*}
    \kappa(x,y) = \inf_{\substack{f \in Lip(1)\\ \nabla_{yx}f = 1}} \nabla_{xy} \Delta f.
     \end{equation*}
\end{corollary}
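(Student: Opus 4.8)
The plan is to follow the proof of Theorem \ref{thm:curvature_laplacian} essentially verbatim, verifying at each step that replacing the combinatorial distance $d'$ by the weighted distance $d$ changes nothing. The reason this works is structural: both the Kantorovich duality (equation \ref{equ:transdistance}) and the gradient $\nabla_{xy}$ are defined purely in terms of whichever metric $d$ one fixes, while the Laplacian $\Delta$ depends only on the weights $w$; the two ingredients never interact in a way that is sensitive to which metric realizes $d$.

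First I would establish the basic algebraic identity. For any $f\colon V\to\mathbb{R}$ and any center vertex $z\in V$, writing $D_z=\sum_{u\sim z}w(z,u)$ and using the definitions of $\mu_z^\alpha$ in \ref{equ:defofdistriinotherpaper} and of $\Delta$,
$$\sum_{u\in V} f(u)\,\mu_z^\alpha(u) = \alpha f(z) + (1-\alpha)\frac{1}{D_z}\sum_{u\sim z} w(z,u) f(u) = f(z) + (1-\alpha)\Delta f(z).$$
Applying this at $z=x$ and $z=y$ and invoking \ref{equ:transdistance} gives
$$W(\mu_x^\alpha,\mu_y^\alpha) = \sup_{f\in Lip(1)}\Big\{ [f(x)-f(y)] + (1-\alpha)\big[\Delta f(x)-\Delta f(y)\big]\Big\}.$$
Writing $t=1-\alpha$ and letting $\Phi(t)$ denote the right-hand supremum, $\Phi$ is a supremum of functions affine in $t$, hence convex, with $\Phi(0)=\sup_{f\in Lip(1)}[f(x)-f(y)] = d(x,y)$ (again by \ref{equ:transdistance}, now for $\mu_x^1=\delta_x,\ \mu_y^1=\delta_y$). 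Since $\kappa_\alpha(x,y)=1-W(\mu_x^\alpha,\mu_y^\alpha)/d(x,y)$ and $\kappa_1(x,y)=0$, the defining limit becomes
$$\kappa(x,y) = \lim_{t\to0^+}\frac{1}{t}\Big(1-\frac{\Phi(t)}{d(x,y)}\Big) = -\frac{1}{d(x,y)}\,\Phi'(0^+).$$

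Next I would evaluate the one-sided derivative $\Phi'(0^+)$. All the quantities $f(x)-f(y)$ and $\Delta f(x)-\Delta f(y)$ depend only on the restriction of $f$ to the finite set $N(x)\cup N(y)$, and on this finite set the $Lip(1)$ functions, taken modulo an additive constant, form a compact convex set. Hence $\Phi$ reduces to a finite-dimensional support-type function and Danskin's theorem applies: the right derivative at $t=0$ equals the maximum of the linear coefficient over the face of active maximizers, i.e.
$$\Phi'(0^+) = \sup\Big\{ \Delta f(x)-\Delta f(y) \;:\; f\in Lip(1),\ f(x)-f(y)=d(x,y)\Big\}.$$
Substituting, and rewriting with $\nabla_{yx}\Delta f = (\Delta f(y)-\Delta f(x))/d(x,y)$ and the constraint $f(x)-f(y)=d(x,y)$, that is $\nabla_{xy}f=1$, I obtain
$$\kappa(x,y)=\inf_{\substack{f\in Lip(1)\\ \nabla_{xy}f=1}}\nabla_{yx}\Delta f,$$
and the substitution $f\mapsto -f$ (which preserves $Lip(1)$, sends the constraint $\nabla_{xy}f=1$ to $\nabla_{yx}f=1$, and sends the objective $\nabla_{yx}\Delta f$ to $\nabla_{xy}\Delta f$) turns this into the stated identity.

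I expect the only genuine subtlety, and the single place where one must argue rather than quote, to be the passage to the one-sided derivative: one must show that the maximizing potentials for $W(\mu_x^\alpha,\mu_y^\alpha)$ concentrate on the active constraint $f(x)-f(y)=d(x,y)$ as $\alpha\to1$. The clean route is the convexity-plus-compactness argument above. Alternatively one can argue directly with the machinery of \cite{BCLMP}: for $\alpha$ close to $1$ there is an optimal plan transporting positive mass directly along the edge $xy$, and then Lemma \ref{fu-fv=duv} forces every optimal Kantorovich potential to satisfy $f(x)-f(y)=d(x,y)$, which is exactly the content of (the weighted analogue of) Theorem \ref{thm: lastlinear}. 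Either way, because $d$ enters only through the $Lip(1)$ condition and the normalizing factor $d(x,y)$, the weighted distance causes no additional difficulty beyond the combinatorial case treated in \cite{FlorentinRadoslaw}.
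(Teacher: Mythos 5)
Your argument is correct, and it fills in what the paper leaves implicit: the paper offers no proof of this corollary beyond the remark that the proof of Theorem \ref{thm:curvature_laplacian} in M\"unch--Wojciechowski carries over to the weighted distance. Your write-up follows the same overall strategy as that cited proof --- Kantorovich duality turns $W(\mu_x^\alpha,\mu_y^\alpha)$ into a supremum of functions affine in the idleness, and the curvature becomes a one-sided derivative at $\alpha=1$ --- but you handle the key limit step differently. The paper's own toolkit for ``$\kappa_\alpha/(1-\alpha)$ stabilizes near $\alpha=1$'' rests on piecewise linearity of the idleness function (Corollary \ref{coro:piecewiselinear}, which requires integer-valued $d$, or Theorem \ref{thm: lastlinear}, which requires ``Treelike''), whereas your convexity-plus-compactness/Danskin argument needs neither hypothesis: after normalizing $f(y)=0$ and restricting to $N(x)\cup N(y)$ (with a McShane extension to pass back to global $Lip(1)$ functions, a point you elide but which is routine), the supremum is over a compact convex set and the right derivative at $t=0$ is exactly the sup of $\Delta f(x)-\Delta f(y)$ over the active face $f(x)-f(y)=d(x,y)$. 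This is a genuine advantage: it proves the corollary for arbitrary real-valued weighted distances, which is the generality in which the corollary is actually stated and later used. The final reflection $f\mapsto -f$ converting $\inf_{\nabla_{xy}f=1}\nabla_{yx}\Delta f$ into the stated $\inf_{\nabla_{yx}f=1}\nabla_{xy}\Delta f$ checks out since $\Delta$ is linear and $Lip(1)$ is symmetric under negation.
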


Motivated by Theorem \ref{thm:curvature_laplacian}, here we prove a dual theorem for a limit-free definition for our generalized version and thus for the Lin-Lu-Yau Ricci curvature. 
Let $\mu_x:=\mu_x^0$ be the probability distribution of random walk at $x$ with idleness equal to zero. For any two vertices $u$ and $v$, a {\em $\ast$-coupling} between $\mu_u$ and $\mu_v$ is a mapping $B: V\times V\to \mathbb{R}$ with finite support such that 
\begin{enumerate}
    \item $0<B(u,v)$, but all other values $B(x,y)\leq 0$.
    \item $\sum\limits_{x,y\in V} B(x,y)=0$.  
    \item $\sum\limits_{y \in V} B(x, y)=-\mu_u(x)$ for all $x$ except $u$. 
    \item $\sum\limits_{x \in V} B(x, y)=-\mu_v(y)$ 
for all $y$ except $v$.  
\end{enumerate}
Because of items (2),(3), and (4), we get
$$B(u,v)=\sum_{(x,y)\in V\times V \setminus \{(u,v)\}}-B(x,y) \leq \sum_{x} \mu_u(x) + \sum_{y} \mu_v(y)\leq 2.$$

It is not hard to verify that the solutions exist for the  maximization of 
$\sum\limits_{x, y\in V} B(x, y)d(x, y)$,  considering the $\ast$-coupling $B(x, y)$ as variables in this linear programming problem,  as it is equivalent to the existence of solutions for the minimization of $W(\mu^\alpha_u, \mu^\alpha_v)$, see \cite{Ollivier1}.

\begin{theorem}(Curvature via Coupling function)\label{thm:curvatureviacoupling}
Let $G=(V, E, d, w)$ be a weighted graph with edge weight function $d, w$. $\kappa$ is defined in Def. \ref{def:curvatures}.  For any two vertex $u, v\in V$, we have
$$\kappa(u,v)=\frac{1}{d(u,v)}\sup\limits_{B} \sum\limits_{x, y\in V} B(x, y)d(x, y),$$
where the superemum is taken over all weak $\ast$-coupling $B$ between $\mu_u$ and $\mu_v$.
\end{theorem}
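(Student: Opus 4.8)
The plan is to prove the identity by exhibiting an explicit, cost-preserving correspondence between couplings of the idle measures $\mu_u^\alpha,\mu_v^\alpha$ and $\ast$-couplings of $\mu_u,\mu_v$, valid for all $\alpha$ near $1$, and then pass to the limit. Write $d=d(u,v)$, $W(\alpha)=W(\mu_u^\alpha,\mu_v^\alpha)$, and $S=\sup_B\sum_{x,y}B(x,y)d(x,y)$ for the $\ast$-coupling optimum (attained, as noted before the statement). Since $\kappa_\alpha(u,v)=1-W(\alpha)/d$, it suffices to show $\frac{d-W(\alpha)}{1-\alpha}=S$ for $\alpha$ near $1$, because then $\kappa(u,v)=\lim_{\alpha\to1}\frac{\kappa_\alpha(u,v)}{1-\alpha}=\frac1d\lim_{\alpha\to1}\frac{d-W(\alpha)}{1-\alpha}=S/d$. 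A preliminary observation drives everything: summing condition $(3)$ over all $x\neq u$, together with $\mu_u(u)=0$, gives $\sum_{x\neq u}\sum_y B(x,y)=-1$, so condition $(2)$ forces the full $u$-row sum $\sum_y B(u,y)=1$; symmetrically $\sum_x B(x,v)=1$. In particular $\sum_{y\neq v}B(u,y)=\sum_{x\neq u}B(x,v)=1-B(u,v)$. These are exactly the marginal identities at $u$ and $v$ that the $\ast$-coupling axioms leave unconstrained, and they are what make the constructions below close up.

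For the first direction (an upper bound on $W$), I would take any $\ast$-coupling $B$ and set $A(x,y)=(1-\alpha)(-B(x,y))\ge0$ for $(x,y)\neq(u,v)$ and $A(u,v)=\alpha+(1-\alpha)(1-B(u,v))$. Conditions $(3)$ and $(4)$ make the off-$(u,v)$ marginals match $(1-\alpha)\mu_u$ and $(1-\alpha)\mu_v$, while the row/column identities $\sum_{y\neq v}B(u,y)=\sum_{x\neq u}B(x,v)=1-B(u,v)$ make the marginals at $u$ and at $v$ equal $\alpha$; since $B(u,v)\le2$ one has $A(u,v)\ge 2\alpha-1\ge0$ for $\alpha\ge\tfrac12$, so $A$ is a genuine coupling of $\mu_u^\alpha,\mu_v^\alpha$. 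A direct computation then collapses its cost to $\sum_{x,y}A(x,y)d(x,y)=d-(1-\alpha)\sum_{x,y}B(x,y)d(x,y)$, whence $W(\alpha)\le d-(1-\alpha)S$, i.e. $\frac{d-W(\alpha)}{1-\alpha}\ge S$.

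For the reverse direction, let $A$ be an optimal transport plan for $\mu_u^\alpha,\mu_v^\alpha$ (it exists with finite support). I would reverse the recipe: put $B(x,y)=-A(x,y)/(1-\alpha)\le0$ for $(x,y)\neq(u,v)$ and define $B(u,v)$ through condition $(2)$. Conditions $(3)$ and $(4)$ then follow from the marginals of $A$ and the fact that $\mu_u^\alpha(x)=(1-\alpha)\mu_u(x)$ for $x\neq u$, while $B(u,v)>0$ holds because the constraint $A(u,v)\le\alpha<1$ forces positive mass off the entry $(u,v)$. Running the same cost algebra backwards gives $\sum_{x,y}B(x,y)d(x,y)=\frac{d-W(\alpha)}{1-\alpha}$, hence $S\ge\frac{d-W(\alpha)}{1-\alpha}$. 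Combining the two inequalities yields the exact equality $\frac{d-W(\alpha)}{1-\alpha}=S$ for every $\alpha\in[\tfrac12,1)$, and taking $\alpha\to1$ produces $\kappa(u,v)=S/d(u,v)$, as claimed.

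The main obstacle is entirely the bookkeeping in these two constructions: checking nonnegativity of $A$ and positivity of $B(u,v)$, and above all verifying that the two marginal constraints at $u$ and $v$ — which the $\ast$-coupling definition deliberately omits — are automatically satisfied. The preliminary identity $\sum_y B(u,y)=\sum_x B(x,v)=1$ is the crux; once it is in hand the cost expression telescopes cleanly and, notably, no appeal to integer-valuedness, the ``Treelike'' hypothesis, or the piecewise-linearity results (Corollary \ref{coro:piecewiselinear}, Theorem \ref{thm: lastlinear}) is needed.
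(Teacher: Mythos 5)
Your proposal is correct, and the two coupling constructions at its core are exactly the ones in the paper's proof: your map $B\mapsto A$ with $A(u,v)=1-(1-\alpha)B(u,v)$ and $A=-(1-\alpha)B$ elsewhere is the paper's $A=\mathbf{1}_{(u,v)}-(1-\alpha)B$, and your reverse map is the paper's $B=\frac{1}{1-\alpha}(\mathbf{1}_{(u,v)}-A)$. Where you genuinely diverge is in how the limit $\alpha\to 1$ is handled. The paper runs each direction at a single ``large enough'' $\alpha$ and invokes the identity $\kappa(u,v)=\kappa_\alpha(u,v)/(1-\alpha)$, which rests on the piecewise-linearity of the idleness function (Corollary \ref{coro:piecewiselinear}); but that corollary is only established for integer-valued (or, via rescaling, rational) distance functions $d$, while the theorem is stated for general weighted graphs. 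You instead prove the exact two-sided identity $\frac{d(u,v)-W(\mu_u^\alpha,\mu_v^\alpha)}{1-\alpha}=S$ for \emph{every} $\alpha\in[\tfrac12,1)$ --- using the uniform bound $B(u,v)\le 2$ to get $A(u,v)\ge 2\alpha-1\ge 0$ --- so the defining limit is the limit of a constant and no linearity input is needed. Your explicit derivation of the hidden marginal identities $\sum_y B(u,y)=\sum_x B(x,v)=1$ from conditions (2)--(4) is also a welcome addition: the paper leaves the verification that $A$ and $B$ have the right marginals at $u$ and $v$ as ``straightforward,'' and this identity is precisely what makes it go through. Net effect: same constructions, but your argument is self-contained, works for arbitrary real-valued $d$, and as a byproduct shows $\kappa_\alpha(u,v)/(1-\alpha)$ is constant on $[\tfrac12,1)$.
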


\begin{proof}
First we show 
\begin{equation}
\label{eq:kappa1}
 \kappa(u,v)\leq \frac{1}{d(u,v)}\sup\limits_{B} \sum\limits_{x, y\in V} B(x, y)d(x, y).   
\end{equation}

By Corollary \ref{coro:piecewiselinear}, for large enough $\alpha \in (0, 1)$, we have 
\begin{align*}
    \kappa(u,v) =\frac{\kappa_{\alpha}(u, v)}{1-\alpha} = \frac{1-\frac{W(\mu_u^{\alpha}, \mu_v^{\alpha})}{d(u, v)}}{1-\alpha}.
    \end{align*}
Let $A$ be the optimal coupling function transporting
$\mu^\alpha_u$ to $\mu^\alpha_v$.
Let $\mathbf{1}_{(u,v)}\colon V\times V \to {0,1}$ be the function taking value 1 at
$(u,v)$, and zero otherwise.
Let
$$B=\frac{1}{1-\alpha} ( \mathbf{1}_{(u,v)} - A).$$
It is straight forward to verify that $B$ is a $\ast$-coupling
between $\mu_u$ and $\mu_v$.

Thus, we have
\begin{align*}
 \frac{1}{d(u,v)} \sum\limits_{x, y\in V} B(x, y)d(x, y)
 &= \frac{1}{d(u,v)}\frac{1}{1-\alpha}
 \sum\limits_{x, y\in V} ( \mathbf{1}_{(u,v)}(x,y) - A(x,y))d(x, y)\\
 &= \frac{1}{d(u,v)}\frac{1}{1-\alpha}
 (d(u,v)-W(\mu^\alpha_u, \mu^\alpha_v))\\
 &=\frac{1}{1-\alpha}((1-\alpha)\kappa(u,v))\\
 &=\kappa(u,v).
\end{align*}
Thus \eqref{eq:kappa1} holds.

Now we prove the other direction
\begin{equation}
\label{eq:kappa2}
 \kappa(u,v)\geq \frac{1}{d(u,v)}\sup\limits_{B} \sum\limits_{x, y\in V} B(x, y)d(x, y).   
\end{equation}

Let $B'$ be the optimum $\ast$-coupling between $\mu_u$ and $\mu_v$. 
Choose a large enough $\alpha$ such that $\kappa(u,v) = \frac{\kappa_\alpha(u,v)}{1-\alpha}$ and
$(1-\alpha)B'(u,v)<1$.
Let $A=\mathbf{1}_{(u,v)}-(1-\alpha)B'$.
It is straightforward to verify that
$A$ is a coupling transporting 
$\mu^\alpha_u$ to $\mu^\alpha_v$.
Thus, we have

\begin{align*}
W(\mu^\alpha_v, \mu^\alpha_v)
&\leq \sum_{x,y} A(x,y)d(x,y)\\
&= \sum_{x,y} (\mathbf{1}_{(u,v)}-(1-\alpha)B')  d(x,y)\\
&= d(u,v)-(1-\alpha)\sum_{x,y}B'(x,y)d(x,y).
\end{align*}
Therefore, we have
\begin{align*}
\kappa(u,v) &= \frac{\kappa_\alpha(u,v)}{1-\alpha}\\
 &=\frac{1-\frac{W(\mu^\alpha_v, \mu^\alpha_v)}{d(u,v)}}{1-\alpha}\\
 &\geq \frac{1}{d(u,v)}\sum_{x,y}B'(x,y)d(x,y)\\
 &= \frac{1}{d(u,v)}\sup_B\sum_{x,y}B(x,y)d(x,y).
\end{align*}
The proof is complete. 
\end{proof}

\section{Sum of Ricci Curvatures}\label{sec:sum}
In this section, we study the sum of all edge curvatures
when the distance function $d$ satisfies the following property: 

\begin{property}[``Treelike"]
Let $G=(V, E, d)$ be a  weighted graph, we say $d$ satisfies ``Treelike" if  for any edge $(x, y)\in E$ and for any pair of vertices  $k \in N(x), l \in N(y)$, $d(k, l)=d(k, x)+d(x, y)+d(y, l)$.
\end{property}

A necessary condition for the existence of ``Treelike"  is girth of $G$ is at least 6.
Note when $G$ is a tree graph (finite or infinite), ``Treelike" clearly works for any distance function $d$. For non-tree graphs, one can easily verify that the girth of $G$ must be at least $6$(even if $d$ is not uniform). 
Clearly, there is no $3$-cycle supporting on each edge. Suppose there is a $4$-cycle,  we use $a, b, c, d$ to represent the edge length following one direction of the cycle, then we have $d= a+b+c$ and $b= a+d+c$ which imply that $0=2a+2c$, a contradiction.  Suppose there is a $5$-cycle with $a, b, c, d, e$ as the edge length following one direction of the cycle, then we have $d+e\geq a+b+c$ and $b+c\geq a+d+e$ which give us $0\geq 2a$, a contradiction. 
Suppose there is a $6$-cycle with $a, b, c, d, e, f$ as the edge length following one direction of the cycle, then we have $d+e+f \geq a+b+c$ and $a+b+c\geq d+e+ f$ which give us $c=f$, similarly, $a=d, b=e$.  There is no contradiction caused by the existence of cycles of length greater than $5$.  Thus any weighted graph $G$ satisfying ``Treelike" has $girth(G)\geq 6$.

Given a weight distribution function $w$, let $H(w)$ be the following quantity:
 \begin{align}
H(w) =\sum\limits_{uv\in E(G)} \Big( \frac{1}{D_u}\sum\limits_{x\in \Gamma(u)} \frac{w_{ux}d(u, x)}{d(u, v)} +   \frac{1}{D_v} \sum\limits_{y\in \Gamma(v)} \frac{w_{vy}d(v, y)}{d(u, v)}\Big).
\end{align}

For any weighted graph, we first prove
\begin{lemma}\label{lemma:general}
Let $G=(V, E, d, w)$ be a weighted graph associated by a weight distribution function $w$, then $K(G)\geq 2|V|  -H(w)$, with equality holds if and only if $d$ satisfies ``Treelike".
\end{lemma}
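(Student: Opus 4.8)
The strategy is to apply the coupling characterization of curvature from Theorem \ref{thm:curvatureviacoupling} to each edge $uv$, construct one explicit $\ast$-coupling that yields a lower bound on $\kappa(u,v)$, sum over all edges, and identify the resulting sum as $2|V| - H(w)$. Since Theorem \ref{thm:curvatureviacoupling} gives $\kappa(u,v) = \frac{1}{d(u,v)}\sup_B \sum_{x,y} B(x,y)d(x,y)$ as a supremum over $\ast$-couplings, any single admissible $\ast$-coupling $B$ produces the inequality $\kappa(u,v) \geq \frac{1}{d(u,v)}\sum_{x,y} B(x,y)d(x,y)$. The plan is to choose $B$ to be the ``trivial'' or ``diagonal'' $\ast$-coupling: put the mass $\mu_u$ at each neighbor $x$ of $u$ and the mass $\mu_v$ at each neighbor $y$ of $v$ onto the diagonal entries $B(x,x)$, and place the compensating positive mass at $B(u,v)$. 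Concretely, one sets $B(x,x) = -\mu_u(x)$ and $B(y,y)=-\mu_v(y)$ for the off-center neighbors, with $B(u,v)$ determined by item (2), so that nothing is actually transported except self-loops contributing distance $d(x,x)=0$. One must be slightly careful where the supports of $\mu_u$ and $\mu_v$ overlap (e.g. at $u$, at $v$, and at any common neighbor), but the overlap contributes zero distance and only affects bookkeeping.

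\textbf{Computing the bound.} With this choice, $\sum_{x,y} B(x,y)d(x,y)$ reduces to the contribution of the single positive term $B(u,v)d(u,v)$ minus the self-loop terms (which vanish since $d(x,x)=0$). Using item (2), $\sum_{x,y} B(x,y) = 0$, together with items (3) and (4), one computes $B(u,v) = \mu_u(u) + \mu_v(v) - (\text{overlap corrections})$. Since we are taking $\alpha = 0$ so $\mu_u = \mu_u^0$, we have $\mu_u(u)=0$; the positive mass $B(u,v)$ comes instead from the total mass of $\mu_u$ and $\mu_v$ not accounted for by the diagonal placement. The cleaner route is to directly compute $\sum_{x,y} B(x,y)d(x,y)$ for the diagonal $\ast$-coupling and recognize that the bound $\frac{1}{d(u,v)}\sum_{x,y}B(x,y)d(x,y)$ equals exactly $2 - \frac{1}{D_u}\sum_{x\in\Gamma(u)}\frac{w_{ux}d(u,x)}{d(u,v)} - \frac{1}{D_v}\sum_{y\in\Gamma(v)}\frac{w_{vy}d(v,y)}{d(u,v)}$, i.e. the value $2$ minus the per-edge summand of $H(w)$. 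Summing over all edges then gives $K(G) = \sum_{uv\in E}\kappa(u,v) \geq 2|E|\cdot\frac{?}{}$; here one must check the counting: each vertex $u$ contributes its neighbor-average term once per incident edge, and the constant $2$ per edge, so after summation the total is $2|V| - H(w)$ once one verifies $\sum_{uv\in E} 2 = 2|E|$ reconciles with $2|V|$ via the handshake structure of the $H(w)$ terms (the ``$2$ per edge'' and the way the $\frac1{D_u}$-sums regroup). This arithmetic regrouping is the bookkeeping heart of the computation and should be carried out explicitly.

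\textbf{The equality case.} For the ``if and only if'' I would argue that equality in Theorem \ref{thm:curvatureviacoupling} holds precisely when the chosen diagonal $\ast$-coupling is optimal, i.e. when no mass can be transported more cheaply than along the diagonal-plus-$(u,v)$ scheme. Under the ``Treelike'' hypothesis, for any neighbor $k\in\Gamma(u)$ and any neighbor $l\in\Gamma(v)$ one has $d(k,l) = d(k,u)+d(u,v)+d(v,l)$, which is the maximal possible separation; this is exactly the condition that prevents any cheaper coupling and forces the trivial coupling to be optimal, giving equality. Conversely, if ``Treelike'' fails on some edge, there exist neighbors $k,l$ with $d(k,l) < d(k,u)+d(u,v)+d(v,l)$, and one can reroute a small amount of mass between them to strictly improve the transport, producing a strictly larger $\sum_{x,y}B(x,y)d(x,y)$ and hence $\kappa(u,v)$ strictly above the bound. \textbf{The main obstacle} I anticipate is precisely this equality direction: verifying that the diagonal $\ast$-coupling is genuinely optimal under ``Treelike'' (not merely feasible) requires either invoking the complementary-slackness characterization of Lemma \ref{fu-fv=duv} to exhibit a matching optimal Kantorovich potential, or directly showing that every alternative $\ast$-coupling does no better. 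The feasibility and the lower-bound inequality are routine; pinning down tightness and the strict-improvement argument for the converse is where the real work lies.
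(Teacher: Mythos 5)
Your high-level strategy---one explicit $\ast$-coupling per edge, the lower bound from Theorem \ref{thm:curvatureviacoupling}, summation over edges, and an equality analysis via improving the coupling when ``Treelike'' fails and exhibiting a matching Kantorovich potential when it holds---is indeed the paper's strategy. But the specific coupling you build the proof on is not a $\ast$-coupling, and the argument collapses there. Conditions (3) and (4) in the definition constrain \emph{both} the row sum at every vertex except $u$ \emph{and} the column sum at every vertex except $v$. Take $x\in \Gamma(u)\setminus(\Gamma(v)\cup\{u,v\})$: your diagonal choice $B(x,x)=-\mu_u(x)<0$ satisfies the row constraint at $x$ but makes the column sum at $x$ strictly negative, while condition (4) forces $\sum_z B(z,x)=-\mu_v(x)=0$; no positive compensation is permitted off the single entry $(u,v)$. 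More tellingly, any coupling supported on the diagonal plus $(u,v)$ gives $\sum_{x,y}B(x,y)d(x,y)=B(u,v)\,d(u,v)$, hence the absurd bound $\kappa(u,v)\geq B(u,v)\approx 2$; your own write-up notices that the self-loop terms vanish and then asserts the bound nevertheless equals $2$ minus the distance-weighted $H(w)$ summand---these two statements are inconsistent, and the distance terms in $H(w)$ have to be produced by actual off-diagonal transport.

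The correct construction routes all negative mass through the only unconstrained row and column, namely row $u$ and column $v$: set $B(x,v)=-\tfrac{w_{ux}}{D_u}$ for $x\in\Gamma(u)\setminus\{v\}$, $B(u,y)=-\tfrac{w_{vy}}{D_v}$ for $y\in\Gamma(v)\setminus\{u\}$, $B(v,v)=-\tfrac{w_{uv}}{D_u}$, $B(u,u)=-\tfrac{w_{uv}}{D_v}$, and $B(u,v)=2$. This yields
\begin{equation*}
\kappa(u,v)\;\geq\; 2-\sum_{x\in\Gamma(u)\setminus\{v\}}\frac{w_{ux}}{D_u}\,\frac{d(x,v)}{d(u,v)}-\sum_{y\in\Gamma(v)\setminus\{u\}}\frac{w_{vy}}{D_v}\,\frac{d(u,y)}{d(u,v)},
\end{equation*}
and only after a second step---the triangle inequalities $d(x,v)\leq d(x,u)+d(u,v)$ and $d(u,y)\leq d(u,v)+d(v,y)$---does one arrive at $\tfrac{2w_{uv}}{D_u}+\tfrac{2w_{uv}}{D_v}$ minus the per-edge $H(w)$ summand. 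Summing $\sum_{u\in V}\sum_{v\sim u}\tfrac{2w_{uv}}{D_u}=2|V|$ is what resolves the ``$2|E|$ versus $2|V|$'' bookkeeping you left open. Finally, because the bound now involves two separate inequalities, the equality case must show both are tight: if ``Treelike'' fails at some $x\in\Gamma(u)$, $y\in\Gamma(v)$, one perturbs $B$ by rerouting mass onto the entry $(x,y)$ to strictly improve the coupling (close in spirit to your converse sketch), and under ``Treelike'' one exhibits the Lip(1) potential $f$ with $f(x)=-d(x,u)$ on $\Gamma(u)\setminus\{v\}$, $f(u)=0$, $f(v)=d(u,v)$, $f(y)=d(u,v)+d(v,y)$ on $\Gamma(v)\setminus\{u\}$ and applies Corollary \ref{cor:curvature_laplacian} to get the matching upper bound. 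Your equality-case plan is directionally right, but it cannot be executed until the feasible coupling and the two-stage inequality it produces are in place.
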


\begin{proof}

We fix an edge $uv\in E(G)$, recall $\Gamma (u)$ represents the set of neighbors of vertex $u$. Now we define a function $B: V\times V\to \mathbb{R}$. 
For any $x\in \Gamma(u)\setminus \{v\}$, let 
$B(x,y)=-\frac{w_{ux}}{D_u}$ if $y=v$ and 
 $0$ otherwise.
For any $y\in \Gamma(v)\setminus \{u\}$, let
$B(x,y)=-\frac{w_{vy}}{D_v}$ if $x=u$ and 
$0$ otherwise.
Let $B(v,v)=-\frac{w_{uv}}{D_u}$,
$B(u,u)=-\frac{w_{uv}}{D_v}$,
and $B(u,v)=2$.
The rest of entries are set to $0$.
It is straightforward to verify the following results: 
\begin{center}
 $\sum\limits_{x,y\in V} B(x,y)=0$;  $\sum\limits_{y \in V} B(x, y)=-\mu_u(x)$ for all $x$ except $u$;  $\sum\limits_{x \in V} B(x, y)=-\mu_v(y)$ for all $y$ except $v$.
\end{center} 
Thus $B$ is $\ast$-coupling between $\mu_u$ and $\mu_v$. 
By Theorem \ref{thm:curvatureviacoupling}, we have 
\begin{align}\label{inequal:lowerofkuv}
\begin{split}
\kappa(u,v) &\geq \frac{1}{d(u,v)} \sum_{x,y\in V}B(x,y)d(x,y)\\
&= 2
- \sum_{x\in \Gamma(u)\setminus \{v\}} \frac{w_{ux}}{D_u} 
\frac{d(x,v)}{d(u,v)}
-\sum_{y\in \Gamma(v)\setminus \{u\}} \frac{w_{vy}}{D_v} 
\frac{d(u, y)}{d(u,v)}\\ 
&\geq 2
- \sum_{x\in \Gamma(u)\setminus \{v\}} \frac{w_{ux}}{D_u} 
\frac{d(x,u)+d(u,v)}{d(u,v)}
-\sum_{y\in \Gamma(v)\setminus \{u\}} \frac{w_{vy}}{D_v} 
\frac{d(y,v)+d(u,v)}{d(u,v)}\\ 
&= 2
- \sum_{x\in \Gamma(u)\setminus \{v\}} \frac{w_{ux}}{D_u} 
-\sum_{y\in \Gamma(v)\setminus \{u\}} \frac{w_{vy}}{D_v} 
-
\sum_{x\in \Gamma(u)\setminus \{v\}} \frac{w_{ux}}{D_u} 
\frac{d(x,u)}{d(u,v)}
-\sum_{y\in \Gamma(v)\setminus \{u\}} \frac{w_{vy}}{D_v} 
\frac{d(y,v)}{d(u,v)}\\ 
&= \frac{w_{uv}}{D_u} + \frac{w_{uv}}{D_v} 
- \sum_{x\in \Gamma(u)\setminus \{v\}} \frac{w_{ux}}{D_u} 
\frac{d(x,u)}{d(u,v)}
-\sum_{y\in \Gamma(v)\setminus \{u\}} \frac{w_{vy}}{D_v} 
\frac{d(y,v)}{d(u,v)}\\
&= \frac{2w_{uv}}{D_u} + \frac{2w_{uv}}{D_v} 
- \sum_{x\in \Gamma(u)} \frac{w_{ux}}{D_u} 
\frac{d(x,u)}{d(u,v)}
-\sum_{y\in \Gamma(v)} \frac{w_{vy}}{D_v} 
\frac{d(y,v)}{d(u,v)}.
\end{split}
\end{align}

Therefore,  we have
\begin{align}\label{inequ:totalcurvature}
\begin{split}
\sum_{uv\in E(G)}\kappa(u,v)&\geq  \sum_{uv\in E(G)} \Big\{\frac{2w_{uv}}{D_u} + \frac{2w_{uv}}{D_v} -\sum_{x\in N(u)} \frac{w_{ux}}{D_u} 
\frac{d(x,u)}{d(u,v)}
-\sum_{y\in N(v)} \frac{w_{vy}}{D_v} 
\frac{d(y,v)}{d(u,v)}
\Big\}\\
&= \sum\limits_{u\in V(G)} \sum\limits_{v\sim u} \frac{2w_{uv}}{D_u}- \sum\limits_{u\sim v}\Big( \frac{1}{D_u}  \sum\limits_{x\in \Gamma(u)} \frac{w_{ux}d(u,x)}{d(u, v)}+   \frac{1}{D_v} \sum\limits_{y\in \Gamma(v)} \frac{w_{vy}d(v, y)}{d(y, v)} \Big)\\
&= 2|V|  -\sum\limits_{u\sim v}\Big( \frac{1}{D_u}  \sum\limits_{x\in \Gamma(u)} \frac{w_{ux}d(u,x)}{d(u, v)}+   \frac{1}{D_v} \sum\limits_{y\in \Gamma(v)} \frac{w_{vy}d(v, y)}{d(y, v)} \Big). 
    \end{split}
\end{align}

The proof for $K(G)\geq 2|V|  -H(w)$ is complete. Next, we characterize the equality condition of this inequality. 
For equation in (\ref{inequ:totalcurvature}) holds , both equations in  (\ref{inequal:lowerofkuv}) must hold. For the second equation in  (\ref{inequal:lowerofkuv}), we must have for any edge $uv\in E(G)$, $d(x, v)= d(x, u)+d(u, v)$ for all $x\in \Gamma(u)\setminus \{v\}$ and  $d(u, y)=d(u, v)+d(v, y)$ for all $y\in \Gamma(v)\setminus \{u\}$. We further verify that $G$ satisfies  ``Treelike". 

Suppose there exists an edge $uv\in E(G)$ such that  ``Treelike" fails, and let  $x\in \Gamma(u)\setminus \{v\}, y\in \Gamma(v)\setminus \{u\}$ be the two vertices so that $d(x, y)< d(x, u)+d(u, v)+d(v, y)$. 
We will show the first inequality in (\ref{inequal:lowerofkuv}) would be strict by defining a new $\ast$-coupling function $B'$, which violates the equation $K(G)=2|V|-H(w)$. 

WLOG assume that $\frac{w_{vu}}{D_v}\geq \frac{w_{uv}}{D_u}$. Let $B'(x, y)=-\frac{w_{ux}}{D_u}$, $B'(u, y)=-\frac{w_{vy}}{D_v}+ \frac{w_{ux}}{D_u}$,  $B'(u, v)=2-\frac{w_{ux}}{D_u}$ and $B'(x, v)=0$,  for other entries $B'=B$.  It is easy to verify that $B'$ is a $\ast$-coupling. 
We have 
\begin{align*}
    & \sum_{x,y\in V}B'(x,y)d(x,y) - \sum_{x,y\in V}B(x,y)d(x,y)\\
    &= (B'(x,y)-B(x, y))d(x,y)+ (B'(u, y)-B(u, y))d(u, y) +(B'(u, v)-B(u, v))d(u, v) +(B'(x, v)-B(x, v))d(x, v)\\
    &=(-\frac{w_{ux}}{D_u}-0)d(x,y)+ (-\frac{w_{vy}}{D_v}+ \frac{w_{ux}}{D_u}--\frac{w_{vy}}{D_v})d(u, y) +(2-\frac{w_{ux}}{D_u}-2)d(u, v) +(0--\frac{w_{ux}}{D_u})d(x, v)\\
    &=-\frac{w_{ux}}{D_u}d(x,y)+ \frac{w_{ux}}{D_u}d(u, y) -\frac{w_{ux}}{D_u}d(u, v) +\frac{w_{ux}}{D_u}d(x, v)\\
    &= \frac{w_{ux}}{D_u}\Big(-d(x,y) +d(u, y)-d(u, v) +d(x, v)\Big)\\
    &>\frac{w_{ux}}{D_u}\Big(-d(x, u)-d(u, v)-d(v, y)+d(u, y)-d(u, v) +d(x, v)\Big)\\
    &=\frac{w_{ux}}{D_u}\Big(-d(x, u)-2d(u, v)-d(v, y)+d(u, v)+d(v, y) +d(x, u)+d(u, v)\Big)\\
    &=0.
\end{align*}

Then 
\begin{align*}
    \kappa(u, v)& \geq \frac{1}{d(u,v)} \sum_{x,y\in V}B'(x,y)d(x,y) \\
    & > \frac{2w_{uv}}{D_u} + \frac{2w_{uv}}{D_v} 
- \sum_{x\in \Gamma(u)} \frac{w_{ux}}{D_u} 
\frac{d(x,u)}{d(u,v)}
-\sum_{y\in \Gamma(v)} \frac{w_{vy}}{D_v} 
\frac{d(y,v)}{d(u,v)}, 
\end{align*}
a contradiction to the assumption that equation in (\ref{inequ:totalcurvature}) holds.

Thus ``Treelike" is a necessary condition for $K(G)=2|V|  -H(w)$.  Next, we prove the  lower bound in (\ref{inequal:lowerofkuv}) is tight under ``Treelike". Define function $f: V\to \mathbb{R}$ such that 
$f(u)=0$, $f(x)=-d(x, u)$ for $x\in \Gamma(u)\setminus\{v\}, f(v)=d(u, v)$  and $f(y)=d(u, v)+d(v,y)$ for $y\in \Gamma(v)\setminus\{u\}$. It is straightforward to verify that $f$ is a Lip(1) function.  In addition,  $\nabla_{vu}f = 1$, by Corollary \ref{cor:curvature_laplacian},  we have

\begin{align*}\label{inequ:fisyipoptimaypropertvI}
    \kappa(u, v)& \leq  \frac{\Delta f(u)- \Delta f(v)}{d(u, v)} \\
    & =  \frac{1}{d(u, v)} \Big( \sum\limits_{x\in \Gamma(u)\setminus\{v\}} \frac{w_{ux}}{D_u} (-d(u, x)) + \frac{w_{uv}}{D_u} (d(u, v))-\sum\limits_{y\in \Gamma(v)\setminus\{u\}} \frac{w_{vy}}{D_v} (d(u, v)+d(v,y)-d(u,v))\\
    & \quad\quad\quad\quad\quad\quad- \frac{w_{vu}}{D_v} (-d(u, v))\Big)\\
    & =\frac{w_{uv}}{D_u}+ \frac{w_{vu}}{D_v}-\sum\limits_{u\in \Gamma(u)\setminus\{v\}} \frac{w_{ux}}{D_u} \frac{d(u,x)}{d(u, v)}  -\sum\limits_{y\in \Gamma(v)\setminus\{u\}} \frac{w_{vy}}{D_v}  \frac{d(v, y)}{d(u, v)}\\
    &= \frac{2w_{uv}}{D_u} + \frac{2w_{uv}}{D_v} 
- \sum_{x\in \Gamma(u)} \frac{w_{ux}}{D_u} 
\frac{d(x,u)}{d(u,v)}
-\sum_{y\in \Gamma(v)} \frac{w_{vy}}{D_v} 
\frac{d(y,v)}{d(u,v)}. 
\end{align*}
Therefore, ``Treelike" is a sufficient condition for $K(G)=2|V|-H(w)$. The proof is complete. 
\end{proof}

\subsection{The minimum of K(G) under a certain weight distribution}
In this section,  we study a case of the weight distribution function $w$ where the total curvature achieves the minimum at the uniform distance function.

\begin{theorem}\label{maintheorem}
Let $G=(V, E, d, w)$ be a weighted graph associated by the weight distribution function $w$, where  $w_{e}=\frac{F(d(e))}{d(e)}$ for each edge $e\in E$, and $F(x)$ is  a non-increasing function on $\mathbb{R}^{+}$. Then the total curvature $K(G)\geq 2|V|-2|E|$ with equality holds if and only one of the following two conditions is true:
\begin{enumerate}
\item  the weight distance function $d$ is uniform and  $girth(G)\geq 6$.
\item   $d$ satisfies ``Treelike" and $F$ is a constant function. 
\end{enumerate}
\end{theorem}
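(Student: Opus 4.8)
The plan is to piggyback on Lemma~\ref{lemma:general}, which already gives $K(G)\ge 2|V|-H(w)$ with equality precisely when $d$ is ``Treelike''. Since the hypothesis $w_e=F(d(e))/d(e)$ yields $w_{ux}\,d(u,x)=F(d(u,x))$, the numerators defining $H(w)$ collapse, and the whole task reduces to proving $H(w)\le 2|E|$ and tracking its equality case. First I would regroup $H(w)$ from a sum over edges into a sum over vertices by collecting, for each vertex $u$, the contributions of all edges incident to it:
\[
H(w)=\sum_{u\in V}\frac{S_u}{D_u}\sum_{v\sim u}\frac{1}{d(u,v)},\qquad S_u:=\sum_{x\sim u}F(d(u,x)),\quad D_u=\sum_{x\sim u}\frac{F(d(u,x))}{d(u,x)} .
\]
Because $\sum_{u}\deg(u)=2|E|$, it suffices to establish the local inequality $\dfrac{S_u}{D_u}\sum_{v\sim u}\dfrac1{d(u,v)}\le \deg(u)$ at every vertex.

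For the local bound, fix $u$, write its neighbor distances as $a_1,\dots,a_n$ with $n=\deg(u)$, and set $f_i=F(a_i)$. The claim becomes
\[
\Big(\sum_i f_i\Big)\Big(\sum_i \tfrac1{a_i}\Big)\le n\sum_i \frac{f_i}{a_i},
\]
which is exactly \emph{Chebyshev's sum inequality} for the sequences $(f_i)$ and $(1/a_i)$. These are \emph{similarly ordered}: $F$ is non-increasing and $t\mapsto 1/t$ is decreasing, so both $f_i$ and $1/a_i$ vary oppositely to $a_i$ and hence in the same direction as each other; concretely each summand of $\sum_{i,j}(f_i-f_j)(1/a_i-1/a_j)$ is nonnegative. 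Summing over $u$ gives $H(w)\le 2|E|$, and combining with Lemma~\ref{lemma:general} yields $K(G)\ge 2|V|-H(w)\ge 2|V|-2|E|$.

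For equality I would note that $K(G)=2|V|-2|E|$ forces both inequalities to be tight: $d$ is ``Treelike'' (from Lemma~\ref{lemma:general}), and Chebyshev equality holds at every vertex. The equality case of Chebyshev here means that at each vertex $u$ one of the two sequences is constant; since equal lengths already force equal $F$-values, this says simply that all edges incident to $u$ share one value $F(d(u,\cdot))$. Propagating this common value across adjacent vertices and using connectivity shows $F$ is constant on the set $\{d(e):e\in E\}$ of occurring edge lengths. The two stated conditions are then readily seen to be \emph{sufficient}: if $d$ is uniform with $\mathrm{girth}(G)\ge 6$ then every local Chebyshev term vanishes, and a uniform $d$ with girth at least $6$ is ``Treelike'', so both tightness conditions hold; and if $d$ is ``Treelike'' with $F$ globally constant, both conditions hold as well.

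The main obstacle is the converse (``only if'') direction. The tightness analysis only delivers ``$d$ is Treelike and $F$ is constant along the occurring edge lengths,'' which is genuinely weaker than conditions (1)--(2): when the edge lengths take two or more distinct values on which a non-increasing $F$ happens to be flat, $d$ need not be uniform and $F$ need not be globally constant (for instance, a weighted path with two edge lengths and $F$ flat between them already attains equality). Closing the gap to exactly (1) or (2) is therefore the delicate step, and I expect it to require either invoking the ``Treelike'' structure more forcefully or strengthening the monotonicity hypothesis to strict (as the abstract's ``decreasing'' wording suggests), under which the local condition forces all incident lengths equal at each vertex and hence, by connectivity, a uniform $d$ --- precisely condition (1).
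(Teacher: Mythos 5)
Your argument is essentially the paper's proof: both start from Lemma~\ref{lemma:general}, reduce the problem to showing $H(w)\le 2|E|$, and prove that bound by a rearrangement inequality localized at each vertex. The paper packages the local step as the pairwise inequality of Lemma~\ref{lemma:inequalityaboutw_xy}, namely $\bigl(F(d(e))-F(d(f))\bigr)\bigl(\tfrac{1}{d(f)}-\tfrac{1}{d(e)}\bigr)\le 0$, summed over ordered pairs of neighbours of each vertex --- which is exactly the standard proof of the Chebyshev sum inequality you invoke, so the two computations coincide up to presentation. Your equality analysis (tightness of Lemma~\ref{lemma:general} forces ``Treelike''; tightness of Chebyshev at every vertex plus connectivity forces $F(d(e))$ to take a single value over all $e\in E$) is likewise the same as the paper's.

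The ``gap'' you flag in the only-if direction is not a defect of your argument; it is an imprecision in the theorem's wording that the paper's own proof shares. The paper concludes exactly what you conclude --- in its words, ``$F$ is a constant function over $E(G)$'', i.e.\ constant on the set of occurring edge lengths --- and that is how condition~(2) is meant to be read. Under that reading your proposed near-counterexample (a path with two edge lengths on which $F$ is flat) simply satisfies condition~(2), so there is nothing left to close, and no strengthening of the monotonicity hypothesis to strictness is needed (the paper does not assume it). Note also that condition~(1) is essentially subsumed in~(2): for uniform $d$, ``girth at least $6$'' is equivalent to ``Treelike'' (as you observe), and $F$ is then automatically constant on the single occurring length. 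So your proof is complete as written once ``constant function'' is interpreted as the paper's proof itself does.
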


To prove Theorem \ref{maintheorem}, we need the following lemma. 
\begin{lemma}\label{lemma:inequalityaboutw_xy}
Let $G=(V, E, d)$ be a weighted graph associated by a weight distribution function $w$  with $w_{e}=\frac{F(d(e))}{d(e)}$ for each edge $e\in E$. 
If  $F(x)$ is an non-increasing function on $\mathbb{R}^{+}$, then the following is true.
\begin{align}
  \frac{w_{e}d(e)}{d(f)} + \frac{w_{f}d(f)}{d(e)} \leq w_{e}+ w_{f}.
\end{align}
\end{lemma}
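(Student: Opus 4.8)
The plan is to substitute the explicit form $w_e = F(d(e))/d(e)$ and watch the distance factors collapse. Writing $a := d(e)$ and $b := d(f)$ for brevity, the crucial observation is that $w_e\, d(e) = F(a)$ and $w_f\, d(f) = F(b)$, so the left-hand side of the claimed inequality becomes
$$\frac{w_e d(e)}{d(f)} + \frac{w_f d(f)}{d(e)} = \frac{F(a)}{b} + \frac{F(b)}{a},$$
while the right-hand side is $w_e + w_f = F(a)/a + F(b)/b$. Thus the inequality to be proved is equivalent to $F(a)/b + F(b)/a \le F(a)/a + F(b)/b$, an inequality purely in terms of $F$ evaluated at the two lengths.

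Next I would move all terms to one side and factor. Computing the difference of the right-hand side minus the left-hand side gives
$$\big(F(a) - F(b)\big)\Big(\frac{1}{a} - \frac{1}{b}\Big) = \big(F(a) - F(b)\big)\frac{b - a}{ab},$$
so, since $ab > 0$, the claim reduces to showing $(F(a) - F(b))(b - a) \ge 0$.

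Finally, this last inequality is exactly the statement that $F$ is non-increasing: if $a \le b$ then $F(a) \ge F(b)$ and $b - a \ge 0$, whereas if $a \ge b$ then $F(a) \le F(b)$ and $b - a \le 0$; in either case the two factors have the same sign, so their product is non-negative. Tracing back through the equivalences completes the argument.

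I do not expect any genuine obstacle here. The only step requiring insight is the cancellation $w_e\, d(e) = F(d(e))$, which strips away the distance weights and exposes the underlying monotonicity inequality $(F(a) - F(b))(a - b) \le 0$; once that is noticed, the rest is a one-line factorization. (The companion inequality for $F$ non-decreasing, needed for the maximization result, follows by reversing the sign in the final step.)
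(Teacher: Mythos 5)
Your proof is correct and follows essentially the same route as the paper's: both substitute $w_e = F(d(e))/d(e)$, move everything to one side, and factor the difference into $(F(d(e))-F(d(f)))(d(e)-d(f))$ (up to the positive factor $d(e)d(f)$), which is $\le 0$ by monotonicity of $F$. The only cosmetic difference is that you divide by $ab$ where the paper clears denominators by multiplying by it.
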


\begin{proof}
Taking the subtraction of two sides, it is sufficient to show the following inequality
\begin{align*}
&w_{e}d(e)^2 + w_{f}d(f)^2 - w_{e}d(f)d(e)- w_{f}d(f)d(e)\\
&= ( F(d(e))-F(d(f)) ) d(e) + (F(d(f))- F(d(e)))d(f)\\
&=( F(d(e))-F(d(f)) ) (d(e)-d(f))\\
&\leq 0,
\end{align*}
which is true as  $F(x)$ is an non-increasing function.
\end{proof}

\begin{proof}[Proof of Theorem \ref{maintheorem}:]
By Lemma \ref{lemma:inequalityaboutw_xy}, we have 
\begin{align*}
    \begin{split}
        &\sum\limits_{uv\in E(G)} \Big\{  \frac{1}{D_u} \sum\limits_{x, v\in \Gamma(u)} \Big(\frac{w_{ux}d(u, x)}{d(u, v)}+ \frac{w_{uv}d(u, v)}{d(u, x)} \Big) + \frac{1}{D_v}\sum\limits_{u, y\in \Gamma(v)} \Big( \frac{w_{vy}d(v, y)}{d(u, v)} + \frac{w_{vu}d(u, v)}{d(v, y)}\Big)\Big\}\\
        & \leq \sum\limits_{uv\in E(G)} \Big\{\frac{1}{D_u} \sum\limits_{x, v\in \Gamma(u)} \Big(w_{ux} + w_{uv}\Big) + \frac{1}{D_v} \sum\limits_{u, y\in \Gamma(v)} \Big(w_{vu}+ w_{vy}\Big) \Big\}\\
        & =\sum\limits_{uv\in E(G)}  (2+2)\\
        &=4|E|.  
     \end{split}
\end{align*}

Following the Lemma \ref{lemma:general}, we have 
\begin{align}\label{inequ:totalminimalp}
\begin{split}
K(G) & \geq \sum\limits_{uv\in E(G)} \Big\{\frac{2w_{uv}}{D_u} +  \frac{2w_{vu}}{D_v} 
-\sum\limits_{x\in \Gamma(u)} \frac{w_{ux}d(u, x)}{D_u d(u, v)} -  \sum\limits_{y\in \Gamma(v)} \frac{w_{vy}d(v, y)}{D_y d(u, v)} \Big\}\\
&= 2 |V|-\frac{1}{2} \sum\limits_{uv\in E(G)} \Big\{  \frac{1}{D_u} \sum\limits_{x, v\in \Gamma(u)} \Big(\frac{w_{ux}d(u, x)}{d(u, v)}+ \frac{w_{uv}d(u, v)}{d(u, x)} \Big)\\
& \quad\quad\quad\quad+  \frac{1}{D_v}\sum\limits_{u, y\in \Gamma(v)} \Big( \frac{w_{vy}d(v, y)}{d(u, v)} + \frac{w_{vu}d(u, v)}{d(v, y)}\Big)\Big\}\\
& \geq  2|V|-2|E|.
\end{split}
\end{align}

Recall the first equation in inequality (\ref{inequ:totalminimalp}) holds if and only if $G$ satisfies ``Treelike" by Lemma \ref{lemma:general}; for the second equation holds, there are two cases:  the distance function $d$ is uniform over $E(G)$(then $F$ is a constant function automatically); if $d$ is not uniform, let $x$ be the vertex such that there exist two edges $e, f$ incident to $x$ with $d(e)>d(f)$, then we must have $F(d(e))=F(d(f))$. 
WLOG, let $d(f)=\min \{d(f'): f'\in E(G)\}$, then $F$ is a constant function over $E(G)$ with $F=F(d(f))$.

\end{proof}

\subsection{The maximum of $K(G)$ under a certain weight distribution}
The inequality of $K(G)$ in Theorem \ref{maintheorem} can be reversed if $F(\bullet)$ is an increasing function and $d$ satisfies ``Treelike''. Similar to Lemma \ref{lemma:inequalityaboutw_xy}, we have

\begin{lemma}\label{lemma:inequalityaboutw_xywhenFnondecreasing}
Let $G=(V, E, d)$ be a weighted graph associated by a weight distribution function $w$  with $w_{e}=\frac{F(d(e))}{d(e)}$ for each edge $e\in E$. 
If  $F(x)$ is an increasing function on $\mathbb{R}^{+}$, then the following is true. The equlity holds if and only if $d(e)=d(f)$.
\begin{align}
  \frac{w_{e}d(e)}{d(f)} + \frac{w_{f}d(f)}{d(e)} \geq w_{e}+ w_{f}.
\end{align}
\end{lemma}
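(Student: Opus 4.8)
The plan is to mirror the computation in the proof of Lemma~\ref{lemma:inequalityaboutw_xy}, simply reversing the direction of the resulting inequality. First I would move everything to one side and clear denominators: multiplying the desired inequality by the positive quantity $d(e)d(f)$ reduces the claim to showing
$$ w_{e}d(e)^2 + w_{f}d(f)^2 - (w_{e}+w_{f})d(e)d(f) \ge 0. $$
Substituting $w_{e}d(e)=F(d(e))$ and $w_{f}d(f)=F(d(f))$ collapses this expression, exactly as in the non-increasing case, into the single product
$$ \big(F(d(e))-F(d(f))\big)\big(d(e)-d(f)\big). $$

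Next I would invoke monotonicity. Since $F$ is increasing, the two factors $F(d(e))-F(d(f))$ and $d(e)-d(f)$ always carry the same sign (or one of them vanishes), so their product is nonnegative, which gives the stated inequality. For the equality characterization I would read off that this product is zero precisely when one of its factors is zero. At this point the clause ``equality holds if and only if $d(e)=d(f)$'' forces the interpretation of ``increasing'' as \emph{strictly} increasing: under strict monotonicity, $F(d(e))=F(d(f))$ already implies $d(e)=d(f)$, so the product vanishes exactly on the diagonal $d(e)=d(f)$, and conversely equal distances trivially give equality.

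The algebra here is routine and structurally identical to Lemma~\ref{lemma:inequalityaboutw_xy}, so there is essentially no analytic obstacle in the inequality itself. The only point requiring care, and the step I expect to be the subtlest, is the equality clause: I would make sure the hypothesis is read as strict monotonicity rather than mere nondecrease, since a flat stretch of $F$ on two distinct values would produce equality for $d(e)\neq d(f)$ and break the asserted ``if and only if.'' Keeping this bookkeeping about the precise monotonicity hypothesis consistent is the one place where the argument is more than a direct transcription of the previous lemma.
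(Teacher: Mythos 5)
Your proof is correct and is exactly the computation the paper intends: the paper omits an explicit proof of this lemma, deferring to the identical algebra of Lemma~\ref{lemma:inequalityaboutw_xy} with the inequality reversed, which is precisely what you carry out. Your side remark that the equality clause forces ``increasing'' to mean strictly increasing is also accurate and consistent with how the paper uses the hypothesis.
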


\begin{theorem}\label{theom:w=dmaximaltheorem}
Let $G=(V, E, d, w)$ be a weighted graph such that the distance function $d$ satisfies ``Treelike" and  $F(\bullet)$ is an increasing function, then the total curvature $K(G)$ is at most $2|V|-2|E|$ with equality holds if and only if $d$ is a constant function.
\end{theorem}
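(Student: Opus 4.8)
The plan is to mirror the proof of Theorem \ref{maintheorem} with every inequality reversed, while exploiting the fact that here ``Treelike'' is a \emph{hypothesis} rather than a conclusion. The first step is to invoke the equality clause of Lemma \ref{lemma:general}: since $d$ satisfies ``Treelike'', we get the \emph{exact} identity $K(G)=2|V|-H(w)$, so the whole theorem reduces to the single inequality $H(w)\geq 2|E|$ together with its equality analysis. (This is the only structural difference from the minimum case, where ``Treelike'' had to be forced by a separate $\ast$-coupling perturbation argument.)

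To bound $H(w)$ from below I would regroup its defining sum by vertex rather than by edge. Collecting the contribution of a fixed vertex $u$ produces the double sum $\frac{1}{D_u}\sum_{v,x\in\Gamma(u)}\frac{w_{ux}d(u,x)}{d(u,v)}$, and averaging it against the copy obtained by interchanging the dummy indices $v$ and $x$ symmetrizes each unordered pair of neighbors into $\frac{w_{ux}d(u,x)}{d(u,v)}+\frac{w_{uv}d(u,v)}{d(u,x)}$ — precisely the reorganization already carried out in the proof of Theorem \ref{maintheorem}. Now, in place of Lemma \ref{lemma:inequalityaboutw_xy}, I apply Lemma \ref{lemma:inequalityaboutw_xywhenFnondecreasing} (with $e=ux$, $f=uv$) to each symmetrized pair, giving the reversed bound $\frac{w_{ux}d(u,x)}{d(u,v)}+\frac{w_{uv}d(u,v)}{d(u,x)}\geq w_{ux}+w_{uv}$. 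Summing these, the vertex-$u$ contribution is bounded below by $\frac{1}{2D_u}\sum_{v,x\in\Gamma(u)}(w_{ux}+w_{uv})=|\Gamma(u)|$, and $\sum_u|\Gamma(u)|=2|E|$, so $H(w)\geq 2|E|$ and hence $K(G)\leq 2|V|-2|E|$.

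For the equality characterization I would track when every inequality above is tight. Because $F$ is (strictly) increasing, the equality clause of Lemma \ref{lemma:inequalityaboutw_xywhenFnondecreasing} holds for a given pair exactly when $d(u,x)=d(u,v)$. Hence $K(G)=2|V|-2|E|$ forces $d(u,v)=d(u,x)$ for every vertex $u$ and every pair of neighbors $v,x\in\Gamma(u)$; that is, all edges incident to a common vertex share a single length. The step I expect to be the main obstacle is the \emph{local-to-global upgrade}: if at each vertex $u$ all incident edges have a common length $\ell_u$, then for adjacent $u\sim v$ we have $\ell_u=d(u,v)=\ell_v$, and propagating this equality along paths — using connectivity of $G$ — shows that $d$ is constant on all of $E$. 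Conversely, a constant $d$ makes $d(u,x)=d(u,v)$ for every pair, so all applications of the Lemma are tight, and since ``Treelike'' is assumed this yields $K(G)=2|V|-2|E|$. Everything apart from the connectivity propagation and the careful use of the strictness of $F$ is a sign-flipped transcription of the minimum case.
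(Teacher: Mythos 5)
Your proposal is correct and follows essentially the same route as the paper: invoke the equality case of Lemma \ref{lemma:general} (valid since ``Treelike'' is hypothesized), symmetrize $H(w)$ over pairs of neighbors, and apply Lemma \ref{lemma:inequalityaboutw_xywhenFnondecreasing} to get $H(w)\geq 2|E|$. Your equality analysis (tightness forces all edges at each vertex to share a length, then connectivity propagates this to a global constant) is a welcome elaboration of the paper's terse ``Clearly, the equality holds if and only if $d$ is a constant function,'' but it is the same argument in substance.
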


\begin{proof}
By Lemma \ref{lemma:inequalityaboutw_xywhenFnondecreasing}, we have 
\begin{align*}
    \begin{split}
        &\sum\limits_{uv\in E(G)} \Big\{  \frac{1}{D_u} \sum\limits_{x, v\sim u} \Big(\frac{w_{ux}d(u, x)}{d(u, v)}+ \frac{w_{uv}d(u, v)}{d(u, x)} \Big) + \frac{1}{D_v}\sum\limits_{u, y\sim v} \Big( \frac{w_{vy}d(v, y)}{d(u, v)} + \frac{w_{vu}d(u, v)}{d(v, y)}\Big)\Big\}\\
        & \geq \sum\limits_{uv\in E(G)} \Big\{\frac{1}{D_u} \sum\limits_{x, v\sim u} \Big(w_{ux} + w_{uv}\Big) + \frac{1}{D_v} \sum\limits_{u, y\sim v} \Big(w_{vu}+ w_{vy}\Big) \Big\}\\
        & =\sum\limits_{uv\in E(G)}  (2+2)\\
        &=4|E|.  
     \end{split}
\end{align*}
Clearly, the equality holds if and only if $d$ is a constant function. 

As $G$ is ``Treelike", following the Lemma \ref{lemma:general}, we have 
\begin{align}\label{inequ:totalminimal}
\begin{split}
K(G) &=2|V|-H(w)\\
&= 2 \sum\limits_{u\in V(G)} \sum\limits_{v\sim u} \frac{w_{uv}}{D_u}-\frac{1}{2} \sum\limits_{uv\in E(G)} \Big\{  \frac{1}{D_u} \sum\limits_{x, v\sim u} \Big(\frac{w_{ux}d(u, x)}{d(u, v)}+ \frac{w_{uv}d(u, v)}{d(u, x)} \Big)\\
& \quad\quad\quad\quad+  \frac{1}{D_v}\sum\limits_{u, y\sim v} \Big( \frac{w_{vy}d(v, y)}{d(u, v)} + \frac{w_{vu}d(u, v)}{d(v, y)}\Big)\Big\}\\
& \leq  2|V|-2|E|.
\end{split}
\end{align}
\end{proof}

\begin{corollary}\label{coro:Gauss}
Let $G$ be a finite graph with girth at least $6$ with uniform edge weights. Then Gauss-Bonnet theorem holds for $G$. i.e. $K(G)=\chi(G)$, where $\chi(G)=2-2g(G)$ is the Euler characteristic of $G$, and where $g(G)=|E|-|V|+1$ is the graph genus of $G$. 
\end{corollary}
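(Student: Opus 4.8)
The plan is to recognize that the claimed identity is simply the equality case of Theorem \ref{maintheorem} specialized to the combinatorial setting, so the only genuine content is matching up definitions.

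First I would unpack the Euler characteristic. Since $g(G)=|E|-|V|+1$, we have
$$\chi(G)=2-2g(G)=2-2\big(|E|-|V|+1\big)=2|V|-2|E|.$$
This reduces the corollary to the single assertion $K(G)=2|V|-2|E|$. Next I would observe that ``uniform edge weights'' means the distance function $d$ is constant on $E$; after rescaling we may assume $d\equiv 1$, so $G$ is a combinatorial graph and, as noted just after Definition \ref{def:curvatures}, $\kappa$ then coincides with the Lin--Lu--Yau curvature. With $d$ constant, the weight distribution $w_e=F(d(e))/d(e)$ is also constant, and a function evaluated at a single value of $d$ may be regarded as non-increasing, so the hypotheses of Theorem \ref{maintheorem} are satisfied.

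The graph has girth at least $6$ by assumption, which is precisely equality condition (1) of Theorem \ref{maintheorem} (namely, $d$ uniform and $girth(G)\geq 6$). Invoking that theorem's equality case then gives $K(G)=2|V|-2|E|$, and combining this with the computation of $\chi(G)$ yields $K(G)=2|V|-2|E|=\chi(G)$, as required.

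There is no real obstacle here, since the substance was already carried out in Lemma \ref{lemma:general} and Theorem \ref{maintheorem}: the equality analysis there established that for uniform $d$ the girth-at-least-$6$ condition is exactly what forces the bound to be attained (recall that ``Treelike'' is equivalent to $girth(G)\geq 6$ once $d$ is constant). The only point demanding a moment's care is confirming that the uniform-weight hypothesis places us in case (1) rather than requiring the constancy of $F$ asserted in case (2); in fact both hold here, and either one suffices.
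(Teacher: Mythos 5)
Your proposal is correct and matches the paper's argument: both reduce the corollary to the identity $K(G)=2|V|-2|E|$ obtained from the equality case of the extremal theorems of Section \ref{sec:sum}. The only cosmetic difference is that you invoke equality condition (1) of Theorem \ref{maintheorem} while the paper cites Theorem \ref{theom:w=dmaximaltheorem}; either equality case applies, since uniform $d$ with girth at least $6$ satisfies the hypotheses of both.
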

\begin{proof}
$\chi(G)=2-2g(G)=2-2(|E|-|V|+1)=2|V|-2|E|=K(G)$, where the last equality follows from Theorem \ref{theom:w=dmaximaltheorem}.
\end{proof}

\begin{remark}
Note that the graph curvature here is defined geometrically, via optimal transport, in contrast to previous combinatorial definitions of graph curvature used in versions of the graph Gauss-Bonnet theorem \cite{Peichen}. Intuitively, speaking of the Ricci curvature, the above corollary says that an unweighted graph with girth at least $6$ behaves like a closed surface.
\end{remark}

\bibliographystyle{plain}
\bibliography{totalcurvature}

\end{document}